\newtheorem{thm}{Theorem}[section]
\newtheorem{rem}[thm]{Remark}
\newtheorem{lemma}[thm]{Lemma}
\numberwithin{equation}{section}
\begin{document}
\bigskip

\centerline{\Large\bf  Remark on the global regularity of 2D MHD equations}
\smallskip

\centerline{\Large\bf    with almost Laplacian magnetic diffusion}

\bigskip

\centerline{Zhuan Ye}

\bigskip

\centerline{Department of Mathematics and Statistics, Jiangsu Normal University, }
\medskip

\centerline{101 Shanghai Road, Xuzhou 221116, Jiangsu, PR China}

\medskip

\centerline{E-mail: \texttt{yezhuan815@126.com
}}

\bigskip
\bigskip
{\bf Abstract:}~~%
Whether or not the classical solutions of the two-dimensional (2D) incompressible
magnetohydrodynamics (MHD) equations with only Laplacian magnetic diffusion (without
velocity dissipation) are globally well-posed is a difficult problem and remains completely open. In this paper, we establish the global regularity of solutions to the 2D incompressible MHD equations with almost Laplacian magnetic diffusion in the whole space. This result can be regarded as a further improvement and generalization of the previous works. Consequently, our result is more closer to the resolution of the global regularity issue
on the 2D MHD equations with standard Laplacian magnetic diffusion.

{\vskip 1mm
 {\bf AMS Subject Classification 2010:}\quad 35Q35; 35B35; 35B65; 76D03.

 {\bf Keywords:}
Generalized MHD equations; Classical solutions; Global regularity.}

\vskip .4in
\section{Introduction}
In this article we focus on the global regularity problem concerning the 2D generalized incompressible magnetohydrodynamic (GMHD) equations of the form in the whole space
\begin{equation}\label{GMHD}
\left\{\aligned
&\partial_{t}u+(u\cdot\nabla)u+\nabla p=(b\cdot\nabla) b,\qquad x \in \mathbb{R}^{2},\, t>0,\\
&\partial_{t}b+(u\cdot\nabla)b+\mathcal{L}b=(b\cdot\nabla)u,
\\
&\nabla\cdot u=0,\ \ \ \nabla\cdot b=0,\\
&u(x,0)=u_{0}(x),\,\,b(x,0)=b_{0}(x),
\endaligned \right.
\end{equation}
where $u=(u_{1}(x,t),\,u_{2}(x,t))$ denotes the velocity, $p=p(x,t)$ the scalar pressure and
$b=(b_{1}(x,t),\,b_{2}(x,t))$ the magnetic field  of the fluid.
$u_{0}(x)$ and $b_{0}(x)$ are the given initial data satisfying $\nabla\cdot u_{0}=\nabla\cdot b_{0}=0$.
The operator $\mathcal{L}$ is a Fourier multiplier with symbol $|\xi|^{2}g(\xi)$, namely,
$$\widehat{\mathcal{L}b}(\xi)=|\xi|^{2}g(\xi)\widehat{b}(\xi),$$
with $g(\xi)=g(|\xi|)$ a radial non-decreasing smooth function satisfying the following two conditions

(a)\,\, $g$ obeys that
$$g(\xi)>0 \quad \mbox{for all}\ \ \xi\neq0;$$

(b)\,\, $g$ is of the Mikhlin-H$\rm\ddot{o}$mander type, namely, a constant $\widetilde{C}>0$ such that
$$|\partial_{\xi}^{k}g(\xi)|\leq \widetilde{C}|\xi|^{-k}|g(\xi)|,\quad k\in \{1,\,2\},\  \forall\,\xi\neq0.$$

\vskip .1in
 The MHD equations model
the complex interaction between the fluid dynamic phenomena, such as
the magnetic reconnection in astrophysics and geomagnetic dynamo in
geophysics, plasmas, liquid metals, and salt water, etc (see, e.g.,
\cite{Davidson01,PF}). The
fundamental concept behind MHD is that magnetic fields can induce currents in a moving conductive fluid, which in turn creates forces on the fluid and also changes the magnetic field itself.
Besides their important physical applications, the MHD equations and GMHD equations are also mathematically significant.
Due to the physical background and mathematical relevance, the MHD equations attracted quite a lot of attention lately from various authors. One of the fundamental problems concerning the MHD equations is whether physically relevant regular solutions remain smooth for all time or they develop finite time singularities.
Actually, there is a considerable body of literature on the global regularity of the  MHD equations with different form of the dissipation (see e.g. \cite{CWYSiam14,CW2011,CRegmiW,FNZ14MM,JZ31114,JZ31115,
TYZ,TYZ113,Wu2003,Wu2011,XuZhang,YB2014JMAA,Y3efg5,YX2014NA,YZhao16} and the references cited therein), and here for our purpose we only recall the notable works about the 2D fractional MHD equations
\begin{equation}\label{2dGMHD}
\left\{\aligned
&\partial_{t}u+(u\cdot\nabla)u+(-\Delta)^{\alpha}u+\nabla p=(b\cdot\nabla) b,\qquad x \in \mathbb{R}^{2},\, t>0,\\
&\partial_{t}b+(u\cdot\nabla)b+(-\Delta)^{\beta}b=(b\cdot\nabla)u,
\\
&\nabla\cdot u=0,\ \ \ \nabla\cdot b=0,\\
&u(x,0)=u_{0}(x),\ \ b(x,0)=b_{0}(x),
\endaligned \right.
\end{equation}
where $(-\Delta)^{\gamma}$ is defined by the Fourier transform, namely
$$
\widehat{(-\Delta)^{\gamma} f}(\xi)=|\xi|^{2\gamma}\hat{f}(\xi).
$$
We remark the convention that by $\alpha=0$ we mean that there is no dissipation in
$(\ref{2dGMHD})_{1}$, and similarly $\beta=0$ represents that there is no diffusion in
$(\ref{2dGMHD})_{2}$.
It is well-known that the 2D MHD
equations with both Laplacian dissipation and magnetic diffusion ($\alpha=\beta=1$) have the global smooth solution (e.g. \cite{ST}). In the completely inviscid case ($\alpha=\beta=0$), the question of
whether smooth solution of the 2D MHD equations develops singularity in finite time appears to be out of reach.
Therefore, it is natural to examine the MHD equations with fractional dissipation (see e.g. \cite{CWYSiam14,FNZ14MM,JZ31114,JZ31115,LinZhang1,Renwxz,TYZ113,YB2014JMAA,Y3efg5,
YX2014NA,
Zhangt} and the references cited therein). Let us review several works about the system (\ref{2dGMHD}) which are very closely related to our study. To the best of our knowledge, the issue of the global regularity for 2D resistive MHD $(\alpha=0,\beta=1)$ is still a challenging open problem (see \cite{CW2011,LZ}) as we are unable to derive the key {\it a priori} estimate of $\|\nabla j\|_{L_{t}^{1}L^{\infty}}$. But if more dissipation is added, then the corresponding system do admit a unique global regular solution. On the one hand, when $\alpha>0,\beta=1$, the global regularity issue was solved by Fan and al. \cite{FNZ14MM}. Very recently, Yuan and Zhao improved this work as they obtained the global regularity of solutions requiring the dissipative operators weaker than any power of the fractional Laplacian.
On the other hand, when $\alpha=0,\beta>1$, the global regularity of smooth solutions for the corresponding system was established in works \cite{CWYSiam14,JZ31115} with quite different approach, which was also improved by Agelas \cite{Agelas} with the $(-\Delta)^{\beta}b$ ($\beta>1$) replaced by $-\Delta\big(\log(e-\Delta)\big)^{\kappa}b$ ($\kappa>1$).

 \vskip .1in
Inspired by the previous works, the aim of this paper is to weaken the operator $\mathcal{L}$ as possible as one can, without losing the global regularity of the system (\ref{GMHD}). More precisely, our main result reads as follows.
\begin{thm}\label{Th1} Let $(u_{0}, b_{0})
\in H^{s}(\mathbb{R}^{2})\times H^{s}(\mathbb{R}^{2})$ for any $s>2$ and satisfy $\nabla\cdot u_{0}=\nabla\cdot b_{0}=0$. Assume that $g(\xi)=g(|\xi|)$ is a radial non-decreasing smooth function satisfying the conditions (a)-(b) and the following growth condition: for any given finite $T\in (0,\,\infty)$ such that
\begin{eqnarray}\label{condi}
\int_{A_{T}}^{\infty}{\frac{dr}{rg(r)}}=C_{T}<\infty,
\end{eqnarray}
where $A_{T}>0$ is the unique solution of the following equation
$$x^{2}g(x)=\frac{1}{T},$$
then the system (\ref{GMHD}) admits a unique global solution such that
$$u,\,\,b\in L^{\infty}\big([0, T]; H^{s}(\mathbb{R}^{2})\big),\ \ \ b\in L^{2}\big([0, T]; H^{s+1}(\mathbb{R}^{2})\big).$$
\end{thm}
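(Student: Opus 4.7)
My plan is to extend the local-in-time smooth solution (whose existence and uniqueness on some $[0,T_{0})$ follows from a standard Friedrichs/mollifier approximation with $H^{s}$ energy estimates, together with a Beale--Kato--Majda type continuation criterion) to any prescribed horizon $T<\infty$, by proving the a priori bound
\begin{equation*}
\int_{0}^{T}\bigl(\|\nabla u\|_{L^{\infty}}+\|\nabla b\|_{L^{\infty}}\bigr)\,dt<\infty .
\end{equation*}
The $L^{2}$ energy identity
\begin{equation*}
\|u(t)\|_{L^{2}}^{2}+\|b(t)\|_{L^{2}}^{2}+2\int_{0}^{t}\|\mathcal{L}^{1/2}b\|_{L^{2}}^{2}\,ds=\|u_{0}\|_{L^{2}}^{2}+\|b_{0}\|_{L^{2}}^{2}
\end{equation*}
places $b$ in the weighted Sobolev space $L^{2}_{T}\dot H^{1,g}$ with $\|f\|_{\dot H^{1,g}}^{2}=\int|\xi|^{2}g(\xi)|\widehat{f}(\xi)|^{2}\,d\xi$.

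Next I would pass to the scalar vorticity $\omega=\partial_{1}u_{2}-\partial_{2}u_{1}$ and current $j=\partial_{1}b_{2}-\partial_{2}b_{1}$, satisfying
\begin{align*}
\partial_{t}\omega+u\cdot\nabla\omega&=b\cdot\nabla j,\\
\partial_{t}j+u\cdot\nabla j+\mathcal{L}j&=b\cdot\nabla\omega+T(\nabla u,\nabla b),
\end{align*}
with $T$ a quadratic form in first derivatives. Pairing the first with $\omega$ and the second with $j$, the cubic couplings $\int(b\cdot\nabla j)\omega$ and $\int(b\cdot\nabla\omega)j$ cancel through $\nabla\cdot b=0$, leaving
\begin{equation*}
\frac{d}{dt}\bigl(\|\omega\|_{L^{2}}^{2}+\|j\|_{L^{2}}^{2}\bigr)+2\|\mathcal{L}^{1/2}j\|_{L^{2}}^{2}\lesssim\|\nabla b\|_{L^{\infty}}\bigl(\|\omega\|_{L^{2}}^{2}+\|j\|_{L^{2}}^{2}\bigr),
\end{equation*}
so a Gronwall bound for $\|\omega\|_{L^{2}}+\|j\|_{L^{2}}$ follows once one controls $\int_{0}^{T}\|\nabla b\|_{L^{\infty}}\,dt$.

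The crux of the proof, and the whole point of condition~(\ref{condi}), is this $L^{1}_{T}L^{\infty}$ bound on $\nabla b$. I would use a Littlewood--Paley decomposition and split at the threshold $A_{T}$: Bernstein controls the low-frequency piece by $A_{T}^{2}\|b\|_{L^{2}}$ from the $L^{2}$ energy, while Cauchy--Schwarz in the Littlewood--Paley index with weights $g(2^{j})^{\pm 1/2}$ gives
\begin{equation*}
\sum_{2^{j}>A_{T}}2^{2j}\|\Delta_{j}b\|_{L^{2}}\lesssim\Bigl(\sum_{2^{j}>A_{T}}\frac{1}{g(2^{j})}\Bigr)^{\!1/2}\Bigl(\sum_{2^{j}>A_{T}}2^{4j}g(2^{j})\|\Delta_{j}b\|_{L^{2}}^{2}\Bigr)^{\!1/2}.
\end{equation*}
The first factor is comparable to $\bigl(\int_{A_{T}}^{\infty}\tfrac{dr}{r g(r)}\bigr)^{1/2}=\sqrt{C_{T}}$ by~(\ref{condi}), while the second factor is $\|\mathcal{L}^{1/2}\nabla b\|_{L^{2}}\sim\|\mathcal{L}^{1/2}j\|_{L^{2}}$, which lies in $L^{2}_{T}$ once the vorticity--current inequality above is closed (Young absorbing the dissipation factor into the left-hand side). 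The Mikhlin--H\"ormander hypothesis~(b) is precisely what lets one interchange $\nabla$ with $\mathcal{L}^{1/2}$ and treat $\mathcal{L}$ as a Calder\'on--Zygmund-type operator on $L^{p}$.

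With $\int_{0}^{T}\|\nabla b\|_{L^{\infty}}\,dt$ bounded and $\omega,j\in L^{\infty}_{T}L^{2}$, I would promote to $L^{q}$ bounds on $(\omega,j)$ for some fixed $q>2$, then apply the Kozono--Taniuchi / Brezis--Gallouet logarithmic inequality
\begin{equation*}
\|\nabla u\|_{L^{\infty}}\lesssim 1+\|\omega\|_{L^{q}}+\|\omega\|_{L^{\infty}}\log\bigl(e+\|u\|_{H^{s}}\bigr)
\end{equation*}
together with a Gronwall argument to extract $\int_{0}^{T}\|\nabla u\|_{L^{\infty}}\,dt<\infty$ and the full $H^{s}$ bound via standard Kato--Ponce commutator estimates. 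The main obstacle is the frequency-splitting step: the choice $A_{T}^{2}g(A_{T})=1/T$ is precisely what balances the Bernstein-dominated low-frequency part against the dissipation-dominated high-frequency tail, and the finiteness of $\int_{A_{T}}^{\infty}\tfrac{dr}{r g(r)}$ is exactly the borderline growth condition on $g$ that makes the dyadic sum convergent --- the very same borderline that leaves the classical Laplacian case ($g\equiv 1$, for which the integral diverges) open.
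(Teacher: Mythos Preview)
Your plan has a genuine circularity at the $H^{1}$ step that prevents it from closing. You bound the cubic term $\int T(\nabla u,\nabla b)\,j\,dx$ by $\|\nabla b\|_{L^{\infty}}(\|\omega\|_{L^{2}}^{2}+\|j\|_{L^{2}}^{2})$, so the Gronwall factor is $\|\nabla b\|_{L^{\infty}}$. Your Littlewood--Paley bound for this quantity reads
\[
\|\nabla b\|_{L^{\infty}}\;\lesssim\; A_{T}^{2}\|b\|_{L^{2}}+\sqrt{C_{T}}\,\|\mathcal{L}^{1/2}j\|_{L^{2}},
\]
but the second term is precisely the dissipation you are trying to produce from the $H^{1}$ inequality. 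Substituting back and applying Young's inequality as you suggest turns the differential inequality for $X:=\|\omega\|_{L^{2}}^{2}+\|j\|_{L^{2}}^{2}$ into
\[
\tfrac{d}{dt}X+\|\mathcal{L}^{1/2}j\|_{L^{2}}^{2}\;\lesssim\; X+X^{2},
\]
a Riccati inequality that does not rule out finite-time blow-up. The paper avoids this entirely: it estimates the same cubic term by $\|\omega\|_{L^{2}}\|j\|_{L^{4}}^{2}\lesssim\|\omega\|_{L^{2}}\|j\|_{L^{2}}\|\nabla j\|_{L^{2}}$ together with the elementary bound $\|\nabla j\|_{L^{2}}\lesssim\|j\|_{L^{2}}+\|\mathcal{L}^{1/2}j\|_{L^{2}}$ (valid because $g(\xi)\geq g(1)$ for $|\xi|\geq 1$), absorbs $\|\mathcal{L}^{1/2}j\|_{L^{2}}^{2}$, and is left with a \emph{linear} Gronwall whose weight $1+\|\mathcal{L}^{1/2}b\|_{L^{2}}^{2}$ is integrable by the basic energy law. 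Thus $H^{1}$ closes before any $L^{\infty}$ information on $\nabla b$ is available.

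There is a second gap downstream. To feed the logarithmic inequality you need $\|\omega\|_{L^{\infty}}$, and the vorticity transport equation forces this through $b\cdot\nabla j$, i.e.\ through $\int_{0}^{T}\|\nabla j\|_{L^{\infty}}\,dt$, which is one derivative above the $\|\nabla b\|_{L^{\infty}}$ you control. Running your Bernstein/Cauchy--Schwarz split on $\nabla j$ would require $\|\mathcal{L}^{1/2}\nabla j\|_{L^{2}_{T}L^{2}}$, an $H^{2}$--level dissipation that reproduces the same circularity one level up. The paper's route is different in kind: it writes $b$ and $j$ via Duhamel with the kernel $\widehat{K}(\xi,t)=e^{-t|\xi|^{2}g(\xi)}$, proves the pointwise-in-time bounds
\[
\|\nabla K(t)\|_{L^{2}}+\|K(t)\|_{L^{\infty}}+\|\nabla^{2}K(t)\|_{L^{1}}\;\lesssim\; t^{-1}g(A_{t})^{-1},
\]
and observes that condition~(\ref{condi}) is exactly the statement that $t^{-1}g(A_{t})^{-1}\in L^{1}(0,T)$. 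This yields, in succession, $\|b\|_{L^{\infty}_{T}L^{\infty}}$, then $\|j\|_{L^{2}_{T}L^{\infty}}$, and finally a \emph{coupled linear} system for $\|\nabla j\|_{L^{1}_{T}L^{\infty}}$ and $\|\omega\|_{L^{\infty}_{T}L^{\infty}}$ (the latter from the transport estimate $\|\omega(t)\|_{L^{\infty}}\leq\|\omega_{0}\|_{L^{\infty}}+\|b\|_{L^{\infty}_{T}L^{\infty}}\int_{0}^{t}\|\nabla j\|_{L^{\infty}}$) that closes by a genuine linear Gronwall. Only then does the logarithmic inequality and the Kato--Ponce $H^{s}$ estimate enter, exactly as in your last paragraph.
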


\begin{rem}\rm
On the one hand, $A_{T}$ is a decreasing function of $T$ which obviously satisfies
$$\lim_{T\rightarrow \infty}A_{T}=0.$$
On the other hand, $C_{T}$ is a non-decreasing function of $T$ which can satisfy the condition
$$\lim_{T\rightarrow \infty}C_{T}=\infty.$$
\end{rem}

\begin{rem}\rm
The typical examples of $g$ satisfying the conditions (a)-(b) and (\ref{condi}) are
\begin{equation}
\begin{split}
& g(r)=r^{\mu_{1}} \ \ \mbox{with}\ \mu_{1}>0; \\
& g(r)=\big(\ln(1+r)\big)^{\mu_{2}} \ \ \mbox{with}\  \mu_{2}>1;\\
& g(r)=r^{\mu_{3}}\big(\ln(1+r)\big)^{\mu_{4}} \ \ \mbox{with}\ \mu_{3}>0,\, \mu_{4}\geq0;\\
& g(r)=\ln(1+r)\Big(\ln\big(1+\ln(1+r)\big)\Big)^{\mu_{5}} \ \ \mbox{with}\ \mu_{5}>1.\nonumber
\end{split}
\end{equation}
\end{rem}

\begin{rem}\rm
In \cite{Agelas}, Agelas proved the the global regularity of the smooth solution of the system (\ref{GMHD}) with the operator given by $\mathcal{L}=-\Delta\big(\log(e-\Delta)\big)^{\kappa}$ for $\kappa>1$.
Obviously, one can easily check that $\big(\log(e+|\xi|^{2})\big)^{\kappa}$ for $\kappa>1$ satisfies our conditions (a)-(b) and (\ref{condi}).
Clearly, our result generalizes the work \cite{Agelas}, and improves the previous works \cite{CWYSiam14,JZ31115} which require the dissipative $(-\Delta)^{\beta}b$ with $\beta>1$.
\end{rem}
\begin{rem}\rm
For the system (\ref{GMHD}), it remains an open problem whether there
exists a global smooth solution when the function $g(\xi)$ is a positive constant.
\end{rem}

The plan of this paper is as follows: we collect some useful properties of the operator $\mathcal{L}$ in the Section 2, and then we prove Theorem \ref{Th1} in the Section 3. Throughout this paper, the letter $C$ denotes various positive and finite constants whose exact values are
unimportant and may vary from line to line.

\vskip .3in
\section{ The properties of the operator $\mathcal{L}$}\setcounter{equation}{0}
This section is devoted to establishing some properties of the operator $\mathcal{L}$, which are the key components in proving our main theorem.
To begin with, we consider the following linear inhomogeneous equation
\begin{equation}\label{linear}
\left\{\aligned
&\partial_{t}W+\mathcal{L}W=f,
\\
&W(x,0)=W_{0}(x).
\endaligned \right.
\end{equation}
Thanks to the Fourier transform method, the solution of the linear inhomogeneous equation (\ref{linear}) can be explicitly given by
\begin{equation}\label{solve}W(t)=K(t)\ast W_{0}+\int_{0}^{t}{K(t-\tau)\ast f(\tau)\,d\tau},\end{equation}
where the kernel function $K$ satisfies
\begin{eqnarray} 
K(x,t)=\mathcal{F}^{-1}\big(e^{-t|\xi|^{2}g(\xi)}\big)(x).\nonumber
\end{eqnarray}

\vskip .1in
Now we will establish the following estimate which plays an essential role in proving our main theorem.
\begin{lemma}\label{tL31}
For any $k\geq0$ and $s>k-1$, there exists a constant $C$ depending only on $s$ and $k$ such that for any $t>0$
\begin{eqnarray}\label{tok202}
\int_{\mathbb{R}^{2}}{|\xi|^{2s}g(\xi)^{k}e^{-2t|\xi|^{2}g(\xi)}\,d\xi}\leq Ct^{-(s+1)} g(A_{t})^{-(s-k+1)},
\end{eqnarray}
where $A_{t}>0$ is the unique solution of the following equation
\begin{eqnarray}\label{fgetfwf}
x^{2}g(x)=\frac{1}{t}.\end{eqnarray}
\end{lemma}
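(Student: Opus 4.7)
The plan is to pass to polar coordinates in $\mathbb{R}^{2}$ and split the resulting radial integral at the critical scale $r=A_{t}$, which is exactly the threshold where $tr^{2}g(r)=1$: below it the exponential weight is $O(1)$, while above it genuine decay takes over. Setting $\phi(r):=r^{2}g(r)$, this reduces (\ref{tok202}) to estimating $J_{1}+J_{2}$, where
$$J_{1}:=\int_{0}^{A_{t}} r^{2s+1}g(r)^{k}e^{-2t\phi(r)}\,dr,\qquad J_{2}:=\int_{A_{t}}^{\infty} r^{2s+1}g(r)^{k}e^{-2t\phi(r)}\,dr.$$

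For $J_{1}$ I would proceed directly: drop the exponential via $e^{-2t\phi(r)}\leq 1$, and replace $g(r)^{k}$ by $g(A_{t})^{k}$ using the fact that $g$ is non-decreasing (legitimate since $k\geq 0$). The residual integral $\int_{0}^{A_{t}} r^{2s+1}\,dr$ converges thanks to $s>k-1\geq -1$ and produces $A_{t}^{2s+2}$. Invoking the defining relation $A_{t}^{2}g(A_{t})=1/t$ then converts this into $C\,t^{-(s+1)}g(A_{t})^{-(s-k+1)}$, matching the target.

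For $J_{2}$ the natural move is the substitution $u=t\phi(r)$, which sends $r=A_{t}$ to $u=1$. Since $g$ is non-decreasing, $\phi'(r)=2rg(r)+r^{2}g'(r)\geq 2rg(r)>0$, hence $dr\leq du/(2trg(r))$. The key algebraic identity is
$$r^{2s+1}g(r)^{k}\,dr\leq \frac{r^{2s}g(r)^{k-1}}{2t}\,du=\frac{\phi(r)^{s}g(r)^{k-s-1}}{2t}\,du=\frac{u^{s}\,g(r)^{k-s-1}}{2\,t^{s+1}}\,du,$$
which trades the $r$-dependence for a single power of $u$ plus a residual factor $g(r)^{k-s-1}$. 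Here the hypothesis $s>k-1$ is essential: it forces the exponent $k-s-1$ to be strictly negative, so monotonicity of $g$ gives $g(r)^{k-s-1}\leq g(A_{t})^{k-s-1}$ on $[A_{t},\infty)$. What remains is the convergent tail $\int_{1}^{\infty}u^{s}e^{-2u}\,du$, and $J_{2}$ satisfies the same bound as $J_{1}$.

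The main obstacle will be $J_{2}$, specifically arranging the substitution so that the mixed quantity $r^{2s+1}g(r)^{k}$ collapses to something bounded uniformly on the tail. The identity $r^{2s}g(r)^{k-1}=\phi(r)^{s}g(r)^{k-s-1}$ is the crucial observation: it extracts the single variable $u=t\phi(r)$ while leaving behind only $g(r)^{k-s-1}$, whose exponent is negative precisely under the sharp hypothesis $s>k-1$. Notably, the Mikhlin--H\"ormander condition (b) is not required for this particular estimate; positivity and monotonicity of $g$, together with the defining equation for $A_{t}$, are what drive the argument.
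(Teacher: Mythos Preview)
Your argument is correct and complete; the split at $r=A_{t}$ and the treatment of both pieces work exactly as you describe. The paper, however, organizes the two halves a bit differently. For the inner region it does not bound $g(r)^{k}\leq g(A_{t})^{k}$ directly; instead it writes $|\xi|^{2s}g(\xi)^{k}=(2t|\xi|^{2})^{-k}(2t|\xi|^{2}g(\xi))^{k}|\xi|^{2s}$ and uses $\max_{\lambda\geq 0}\lambda^{k}e^{-\lambda}\leq C(k)$ to pull out $t^{-k}$, leaving $\int_{|\xi|\leq R}|\xi|^{2s-2k}\,d\xi$ (this is where $s>k-1$ enters for them). For the outer region the paper avoids your substitution $u=t\phi(r)$ altogether: it splits $e^{-2t|\xi|^{2}g(\xi)}$ into two factors, spends one on the same $\lambda^{k}e^{-\lambda}$ trick, and in the surviving factor freezes $g(\xi)\geq g(R)$ so that the tail becomes the elementary integral $\int_{R}^{\infty}r^{2s-2k+1}e^{-tg(R)r^{2}}\,dr$, which scales as $(tg(R))^{-(s-k+1)}$. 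Your route is slightly cleaner on $J_{1}$ (no need for the $\lambda^{k}e^{-\lambda}$ device) and more structural on $J_{2}$, since the change of variables $u=t\phi(r)$ explains transparently why the combination $t^{-(s+1)}g(A_{t})^{-(s-k+1)}$ must appear; the paper's version is more pedestrian but requires no monotonicity of the map $r\mapsto r^{2}g(r)$ beyond the freezing step. Your closing remark that condition (b) is unused here is also accurate.
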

\begin{proof}[ {Proof of  Lemma \ref{tL31}}]
We split the integral into the following two parts
\begin{eqnarray}
\int_{\mathbb{R}^{2}}{|\xi|^{2s}g(\xi)^{k}e^{-2t|\xi|^{2}g(\xi)}\,d\xi}
=\int_{|\xi|\leq R}{|\xi|^{2s}g(\xi)^{k}e^{-2t|\xi|^{2}g(\xi)}\,d\xi}
+\int_{|\xi|\geq R}{|\xi|^{2s}g(\xi)^{k}e^{-2t|\xi|^{2}g(\xi)}\,d\xi},\nonumber
\end{eqnarray}
where $R>0$ will be fixed hereafter. Recalling $s>k-1$, direct computations yield that the first part admits the following bound
\begin{align}\label{tok203}
\int_{|\xi|\leq R}{|\xi|^{2s}g(\xi)^{k}e^{-2t|\xi|^{2}g(\xi)}\,d\xi} =&\int_{|\xi|\leq R}{|\xi|^{2s}\big(2t|\xi|^{2}\big)^{-k}\big(2t|\xi|^{2}g(\xi)\big)^{k}
e^{-2t|\xi|^{2}g(\xi)}\,d\xi}\nonumber\\ \leq&Ct^{-k}\int_{|\xi|\leq R}{|\xi|^{2s-2k}  \,d\xi}\nonumber\\ \leq&Ct^{-k}\int_{0}^{R}{r^{2s-2k+1}  \,dr}\nonumber\\ \leq&Ct^{-k}R^{2(s-k+1)},
\end{align}
where in the second line we have used the simple fact
\begin{eqnarray}\label{tok204}
\max_{\lambda\geq0}(\lambda^{k}e^{-\lambda})\leq C(k).
\end{eqnarray}
Thanks to the above fact (\ref{tok204}), by some simple computations, we can estimate the remainder term as follows
\begin{align}\label{tok205}
\int_{|\xi|\geq R}{|\xi|^{2s}g(\xi)^{k}e^{-2t|\xi|^{2}g(\xi)}\,d\xi} =&\int_{|\xi|\geq R}{|\xi|^{2s}\big(t|\xi|^{2}\big)^{-k}\big(t|\xi|^{2}g(\xi)\big)^{k}
e^{-t|\xi|^{2}g(\xi)}e^{-t|\xi|^{2}g(\xi)}\,d\xi}\nonumber\\ \leq&Ct^{-k}
\int_{|\xi|\geq R}{|\xi|^{2s-2k}e^{-t|\xi|^{2}g(\xi)}  \,d\xi}
\nonumber\\ \leq&Ct^{-k}
\int_{|\xi|\geq R}{|\xi|^{2s-2k}e^{-t|\xi|^{2}g(R)}  \,d\xi}
\nonumber\\ \leq&Ct^{-k}\int_{R}^{\infty}{r^{2s-2k+1}e^{-tg(R)r^{2}}  \,dr}\nonumber\\ \leq&Ct^{-(s+1)}g(R)^{-(s-k+1)}.
\end{align}
Combining (\ref{tok203}) and (\ref{tok205}), we get
\begin{align}
\int_{\mathbb{R}^{2}}{|\xi|^{2s}g(\xi)^{k}e^{-2t|\xi|^{2}g(\xi)}\,d\xi}
 \leq& Ct^{-k}R^{2(s-k+1)}+Ct^{-(s+1)}g(R)^{-(s-k+1)}
\nonumber\\ \leq& Ct^{-(s+1)}\Big((R^{2}t)^{(s-k+1)}+ g(R)^{-(s-k+1)}\Big).\nonumber
\end{align}
Now taking
$$R^{2}t=\frac{1}{g(R)}\quad \mbox{or}\quad R^{2}g(R)=\frac{1}{t}\quad \mbox{or}\quad R=A_{t},$$
we eventually deduce
\begin{eqnarray}
\int_{\mathbb{R}^{2}}{|\xi|^{2s}g(\xi)^{k}e^{-2t|\xi|^{2}g(\xi)}\,d\xi}
\leq 2Ct^{-(s+1)} g(R)^{-(s-k+1)}=2Ct^{-(s+1)} g(A_{t})^{-(s-k+1)}
.\nonumber
\end{eqnarray}
Thus, we get the desired result.
This concludes the proof of Lemma \ref{tL31}.
\end{proof}

\vskip .1in
With the help of Lemma \ref{tL31}, one can conclude the following results.
\begin{lemma}\label{L32}
For any $s\geq0$, there exists a constant $C$ depending only on $s$ such that for any $t>0$
\begin{eqnarray}\label{tok206}
\|K(t)\|_{\dot{H}^{s}}\leq Ct^{-\frac{s+1}{2}}g(A_{t})^{-\frac{s+1}{2}}.
\end{eqnarray}
In particular, it holds
\begin{eqnarray}\label{tok207}
\|K(t)\|_{L^{\infty}}\leq Ct^{-1}g(A_{t})
^{-1},
\end{eqnarray}
where $A_{t}$ is given by (\ref{fgetfwf}).
\end{lemma}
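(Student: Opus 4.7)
The plan is to reduce both inequalities to integrals of the type already handled in Lemma \ref{tL31}, using the explicit Fourier representation $\widehat{K(t)}(\xi)=e^{-t|\xi|^{2}g(\xi)}$.

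For (\ref{tok206}), I would first invoke Plancherel's identity to write
\begin{equation*}
\|K(t)\|_{\dot{H}^{s}}^{2}=\int_{\mathbb{R}^{2}}|\xi|^{2s}e^{-2t|\xi|^{2}g(\xi)}\,d\xi,
\end{equation*}
which is precisely the integral estimated in (\ref{tok202}) with the parameter $k$ set equal to $0$. The hypothesis $s>k-1$ of Lemma \ref{tL31} then reads $s>-1$, which is automatic since $s\geq 0$, so the lemma yields the bound $Ct^{-(s+1)}g(A_{t})^{-(s+1)}$. Taking square roots gives (\ref{tok206}).

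For (\ref{tok207}), I would use the elementary consequence of Fourier inversion
\begin{equation*}
\|K(t)\|_{L^{\infty}}\leq C\|\widehat{K(t)}\|_{L^{1}}=C\int_{\mathbb{R}^{2}}e^{-t|\xi|^{2}g(\xi)}\,d\xi,
\end{equation*}
and then estimate this integral by mimicking the splitting strategy used in the proof of Lemma \ref{tL31}. On the ball $|\xi|\leq R$ I would simply bound the exponential by $1$, producing a contribution of order $R^{2}$. On the complement $|\xi|\geq R$, the monotonicity of $g$ gives $g(\xi)\geq g(R)$, so the integrand is dominated by $e^{-tg(R)|\xi|^{2}}$, and a polar-coordinates computation yields a contribution of order $(tg(R))^{-1}$. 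Balancing the two pieces by choosing $R=A_{t}$, so that $R^{2}=(tg(R))^{-1}$, then gives $\|\widehat{K(t)}\|_{L^{1}}\leq Ct^{-1}g(A_{t})^{-1}$, which is (\ref{tok207}).

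I do not anticipate a real obstacle here: both estimates are direct corollaries of the splitting argument already developed for Lemma \ref{tL31}, and the only step that needs attention is to apply the monotonicity of $g$ in the correct direction---namely $g(\xi)\geq g(R)$ on $|\xi|\geq R$, so as to bound the exponential from above---as is done in the passage leading to (\ref{tok205}).
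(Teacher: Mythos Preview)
Your argument for (\ref{tok206}) is exactly the paper's: Plancherel reduces the $\dot H^{s}$ norm to the integral in (\ref{tok202}) with $k=0$, and Lemma \ref{tL31} applies since $s\geq 0>-1$.

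For (\ref{tok207}) your route is correct but differs from the paper's. The paper does not go through $\|\widehat{K(t)}\|_{L^{1}}$; instead it invokes the interpolation inequality
\[
\|K(t)\|_{L^{\infty}}\leq C\,\|K(t)\|_{L^{2}}^{1/2}\,\|K(t)\|_{\dot H^{2}}^{1/2}
\]
and simply applies the already-proved bound (\ref{tok206}) at $s=0$ and $s=2$, multiplying $t^{-1/4}g(A_{t})^{-1/4}$ by $t^{-3/4}g(A_{t})^{-3/4}$. Your approach is more self-contained in that it avoids the interpolation step and works directly on the Fourier side, at the cost of repeating the splitting argument from Lemma \ref{tL31}; the paper's approach is shorter because it recycles (\ref{tok206}) as a black box. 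Both yield the same bound with no additional hypotheses, so either is perfectly acceptable here.
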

\begin{proof}[ {Proof of Lemma \ref{L32}}]
According to (\ref{tok202}), one immediately obtains
\begin{align}
\|K(t)\|_{\dot{H}^{s}}^{2} =&
\int_{\mathbb{R}^{2}}{|\xi|^{2s}|\widehat{K}(\xi,t)|^{2}} \,d\xi\nonumber\\
 =&
\int_{\mathbb{R}^{2}}{|\xi|^{2s}e^{-2t|\xi|^{2}g(\xi)}} \,d\xi
\nonumber\\ \leq& Ct^{-(s+1)}g(A_{t})
^{-(s+1)}.\nonumber
\end{align}
By the simple interpolation inequality and (\ref{tok206}), we have
\begin{align}
\|K(t)\|_{L^{\infty}} \leq& C \|K(t)\|_{L^{2}}^{\frac{1}{2}}\|K(t)\|_{\dot{H}^{2}}^{\frac{1}{2}}
\nonumber\\ \leq& Ct^{-\frac{1}{4}}g(A_{t})
^{-\frac{1}{4}}t^{-\frac{3}{4}}g(A_{t})
^{-\frac{3}{4}}
\nonumber\\
 =&
Ct^{-1}g(A_{t})
^{-1}.\nonumber
\end{align}
Therefore, we finally conclude the proof of Lemma \ref{L32}.
\end{proof}

\vskip .1in
Next we would like to show the following lemma.
\begin{lemma}\label{tL33}
 There exists a constant $C$ such that for any $t>0$
\begin{eqnarray}\label{tok208}
\|\nabla^{2}K(t)\|_{L^{1}}\leq Ct^{-1}g(A_{t})
^{-1},
\end{eqnarray}
where $A_{t}$ is given by (\ref{fgetfwf}).
\end{lemma}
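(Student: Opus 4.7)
The plan is to bound the $L^1$ norm by interpolating between two weighted $L^2$ norms that are accessible through Plancherel. Specifically, in $\mathbb{R}^2$ one has the elementary inequality
\begin{equation*}
\|f\|_{L^1}\leq C\|f\|_{L^2}^{1/2}\bigl\||x|^{2}f\bigr\|_{L^2}^{1/2},
\end{equation*}
obtained by splitting $\int|f|\,dx$ at radius $r$, applying Cauchy--Schwarz on each piece (using $\int_{|x|\le r}1\,dx\sim r^2$ and $\int_{|x|>r}|x|^{-4}\,dx\sim r^{-2}$), and optimising in $r$. Applying this to $f=\nabla^{2}K(t)$ reduces the problem to bounding $\|\nabla^{2}K(t)\|_{L^2}$ and $\bigl\||x|^{2}\nabla^{2}K(t)\bigr\|_{L^2}$.

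The first of these is immediate. Since $\widehat{\nabla^{2}K}(\xi,t)$ has magnitude $|\xi|^{2}e^{-t|\xi|^{2}g(\xi)}$, Plancherel together with Lemma \ref{tL31} applied with $s=2$, $k=0$ gives
\begin{equation*}
\|\nabla^{2}K(t)\|_{L^2}^{2}=\int_{\mathbb{R}^{2}}|\xi|^{4}e^{-2t|\xi|^{2}g(\xi)}\,d\xi\leq Ct^{-3}g(A_{t})^{-3}.
\end{equation*}

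The main work, and the main obstacle, is the weighted bound $\bigl\||x|^{2}\nabla^{2}K(t)\bigr\|_{L^2}$. Multiplication by $|x|^{2}$ corresponds on the Fourier side to $-\Delta_{\xi}$, so I would expand
\begin{equation*}
\Delta_{\xi}\bigl(\xi_{i}\xi_{j}\,e^{-t|\xi|^{2}g(\xi)}\bigr)=2\delta_{ij}\,h+2(\xi_{i}\partial_{j}h+\xi_{j}\partial_{i}h)+\xi_{i}\xi_{j}\Delta_{\xi}h,\qquad h(\xi):=e^{-t|\xi|^{2}g(\xi)},
\end{equation*}
and estimate the $\xi$-derivatives of $h$ by writing $h=e^{-\phi}$ with $\phi=t|\xi|^{2}g(\xi)$. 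The Mikhlin--H\"ormander bounds on $g$ in condition (b) yield $|\partial_{\xi}\phi|\leq Ct|\xi|g(\xi)$ and $|\partial_{\xi}^{2}\phi|\leq Ctg(\xi)$, hence $|\partial_{\xi}h|\leq Ct|\xi|g(\xi)\,h$ and $|\Delta_{\xi}h|\leq C\bigl(t^{2}|\xi|^{2}g(\xi)^{2}+tg(\xi)\bigr)h$. Assembling the pieces one obtains the pointwise bound
\begin{equation*}
\bigl|\Delta_{\xi}(\xi_{i}\xi_{j}h)\bigr|\leq C\bigl(1+t|\xi|^{2}g(\xi)+t^{2}|\xi|^{4}g(\xi)^{2}\bigr)e^{-t|\xi|^{2}g(\xi)}.
\end{equation*}

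Squaring, integrating, and applying Lemma \ref{tL31} termwise with the choices $(s,k)=(0,0),(2,2),(4,4)$, each term yields the same order $Ct^{-1}g(A_{t})^{-1}$, so
\begin{equation*}
\bigl\||x|^{2}\nabla^{2}K(t)\bigr\|_{L^2}^{2}\leq Ct^{-1}g(A_{t})^{-1}.
\end{equation*}
Plugging both $L^2$ bounds into the interpolation inequality yields
\begin{equation*}
\|\nabla^{2}K(t)\|_{L^1}\leq C\bigl(t^{-3}g(A_{t})^{-3}\bigr)^{1/4}\bigl(t^{-1}g(A_{t})^{-1}\bigr)^{1/4}=Ct^{-1}g(A_{t})^{-1},
\end{equation*}
which is the claimed estimate. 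The delicate step is the symbol computation: the second derivative $\Delta_{\xi}h$ produces a factor $t^{2}|\xi|^{4}g^{2}$, which only cancels against the exponential weight thanks to Lemma \ref{tL31} being applicable at the balanced exponent $k=s$; this in turn crucially uses that the hypotheses $s>k-1$ of Lemma \ref{tL31} still allow the choices $(s,k)=(2,2)$ and $(4,4)$.
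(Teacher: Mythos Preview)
Your proof is correct and follows essentially the same route as the paper: both use the $L^1$ interpolation $\|f\|_{L^1}\le C\|f\|_{L^2}^{1/2}\||x|^{2}f\|_{L^2}^{1/2}$ (stated in the paper as $\|\widehat{h}\|_{L^1}\le C\|h\|_{L^2}^{1/2}\|h\|_{\dot H^2}^{1/2}$), then reduce via Plancherel to the same pointwise bound $|\nabla_\xi^2(\xi_i\xi_j\widehat K)|\le C(1+t|\xi|^2g+t^2|\xi|^4g^2)e^{-t|\xi|^2g}$ and invoke Lemma~\ref{tL31} with the same parameter choices $(s,k)=(0,0),(2,2),(4,4)$. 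The only cosmetic difference is that the paper first invokes the radial symmetry of $K$ to write $\nabla^2K=\mathcal F(\xi^2\widehat K)$, whereas you apply the interpolation directly in physical space; your formulation is slightly cleaner and avoids that detour.
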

\begin{proof}[{Proof of Lemma \ref{tL33}}]
Since for any radial function $g$ in $\mathbb{R}^{n}$, one may check that
$$\widehat{g}(x)=\check{g}(x),$$
where $\check{g}$ denotes the inverse Fourier transform of $g$.
Since $\widehat{K}(.,t)$ is a radial function, then we infer that ${K}(.,t)$ is also a radial function. Based on this observation and the following interpolation inequality (see the end of this lemma)
\begin{eqnarray}\label{tdsfsdgervv}\|\widehat{h}\|_{L^{1}}\leq C \|h\|_{L^{2}}^{\frac{1}{2}}\|h\|_{\dot{H}^{2}}^{\frac{1}{2}}\end{eqnarray}
it yields that
\begin{align}\label{tok209}
\|\nabla^{2}K(t)\|_{L^{1}}&= \|\mathcal{F}(\xi^{2}\check{K}(\xi,t))\|_{L^{1}}
\nonumber\\&= \|\mathcal{F}(\xi^{2}\widehat{K}(\xi,t))\|_{L^{1}}
\nonumber\\&\leq C \|\xi^{2}\widehat{K}(\xi,t)\|_{L^{2}}^{\frac{1}{2}}
\|\nabla_{\xi}^{2}(\xi^{2}\widehat{K}(\xi,t))\|_{L^{2}}^{\frac{1}{2}},
\end{align}
where we have denoted by $\xi^{2}$ the matrix $\xi\otimes \xi$.
By (\ref{tok206}), one gets
\begin{eqnarray}\label{tok210}
\|\xi^{2}\widehat{K}(\xi,t)\|_{L^{2}}
\leq Ct^{-\frac{3}{2}}g(A_{t})
^{-\frac{3}{2}}.
\end{eqnarray}
It follows from some direct computations that
$$|\nabla_{\xi}^{2}(\xi^{2}\widehat{K}(\xi,t))|\leq C(|\widehat{K}(\xi,t)|+|\xi|\,|\nabla_{\xi}\widehat{K}(\xi,t)|
+|\xi|^{2}|\nabla_{\xi}^{2}\widehat{K}(\xi,t)|).$$
Recalling $\widehat{K}(\xi,t)=e^{-t|\xi|^{2}g(\xi)}$ and the property $|\partial_{\xi}^{k}g(\xi)|\leq \widetilde{C}|\xi|^{-k}|g(\xi)|,\, k\in \{1,\,2\}$, it is not hard to check
\begin{align}
|\nabla_{\xi}\widehat{K}(\xi,t)|
&\leq  C(t|\xi|\,|g(\xi)|+t|\xi|^{2}|\partial_{\xi}g(\xi)|)|\widehat{K}(\xi,t)|\nonumber\\
&\leq   Ct|\xi|\,|g(\xi)|\,|\widehat{K}(\xi,t)|\nonumber
\end{align}
and
\begin{align}\label{oklll4788}
|\nabla_{\xi}^{2}\widehat{K}(\xi,t)|
\leq & C(t^{2}|\xi|^{2}|g(\xi)|^{2}+t^{2}|\xi|^{4}|\partial_{\xi}g(\xi)|^{2}+t|g(\xi)|+
t|\xi||\partial_{\xi}g(\xi)|+t|\xi|^{2}|\partial_{\xi}^{2}g(\xi)|)
\nonumber\\& \times|\widehat{K}(\xi,t)|\nonumber\\
\leq &C(t^{2}|\xi|^{2}|g(\xi)|^{2}+t|g(\xi)|)|\widehat{K}(\xi,t)|.
\end{align}
Therefore, we obtain
$$|\nabla_{\xi}^{2}(\xi^{2}\widehat{K}(\xi,t))|\leq C(1+t|\xi|^{2}|g(\xi)|+t^{2}|\xi|^{4}|g(\xi)|^{2})|\widehat{K}(\xi,t)|.$$
We appeal to (\ref{tok202}) to get
\begin{align}\label{tok211}
\|\nabla_{\xi}^{2}(\xi^{2}\widehat{K}(\xi,t))\|_{L^{2}}^{2}&\leq
C\int_{\mathbb{R}^{2}}{e^{-2t|\xi|^{2}g(\xi)}\,d\xi}+C
t^{2}\int_{\mathbb{R}^{2}}{|\xi|^{4}g(\xi)^{2}e^{-2t|\xi|^{2}g(\xi)}\,d\xi}
\nonumber\\& \quad+C
t^{4}\int_{\mathbb{R}^{2}}{|\xi|^{8}g(\xi)^{4}e^{-2t|\xi|^{2}g(\xi)}\,d\xi}
\nonumber\\
&\leq
Ct^{-1}g(A_{t})^{-1}+
Ct^{2}t^{-3}g(A_{t})^{-1}
+C
t^{4}t^{-5}g(A_{t})^{-1}\nonumber\\
&\leq
Ct^{-1}g(A_{t})^{-1}.
\end{align}
Inserting (\ref{tok210}) and (\ref{tok211}) into (\ref{tok209}) yields
$$\|\nabla^{2}K(t)\|_{L^{1}}\leq Ct^{-1}g(A_{t})
^{-1},$$
which is nothing but the desired estimate (\ref{tok208}). Finally, let us show (\ref{tdsfsdgervv})
\begin{align}
\|\widehat{h}\|_{L^{1}}&= \int_{\mathbb{R}^{2}} |\widehat{h}(\xi)|\,d\xi\nonumber\\
&= \int_{|\xi|\leq N} |\widehat{h}(\xi)|\,d\xi+\int_{|\xi|\geq N} |\widehat{h}(\xi)|\,d\xi
\nonumber\\
&= \int_{|\xi|\leq N} |\widehat{h}(\xi)|\,d\xi+\int_{|\xi|\geq N}|\xi|^{-2}|\xi|^{2} |\widehat{h}(\xi)|\,d\xi
\nonumber\\
&\leq C N\Big(\int_{|\xi|\leq N} |\widehat{h}(\xi)|^{2}\,d\xi\Big)^{\frac{1}{2}}+CN^{-1}\Big(\int_{|\xi|\geq N}|\xi|^{4} |\widehat{h}(\xi)|^{2}\,d\xi\Big)^{\frac{1}{2}}
\nonumber\\
&\leq C N\|h\|_{L^{2}}+CN^{-1}\|h\|_{\dot{H}^{2}}
\nonumber\\
&\leq C
\|h\|_{L^{2}}^{\frac{1}{2}}\|h\|_{\dot{H}^{2}}^{\frac{1}{2}},\nonumber
 \end{align}
where in the last line we have chosen $N$ as
$$N=\Big(\frac{\|h\|_{\dot{H}^{2}}}{\|h\|_{L^{2}}}\Big)^{\frac{1}{2}}.$$
We thus complete the proof of the lemma.
\end{proof}

\vskip .1in

Now we are ready to show the following key lemma.
\begin{lemma}\label{khrdlyvhbg}
Under the assumptions of Theorem \ref{Th1}, there holds
\begin{align}
\int_{0}^{T}{
\|\nabla K(t)\|_{L^{2}}\,dt}+\int_{0}^{T}{
\| K(t)\|_{L^{2}}^{2}\,dt}+\int_{0}^{T}{
\|K(t)\|_{L^{\infty}}\,dt}+\int_{0}^{T}{
\|\nabla^{2} K(t)\|_{L^{1}}\,dt}\leq C(T).\nonumber
\end{align}
\end{lemma}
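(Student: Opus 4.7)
The plan is to notice that all four integrands are controlled, pointwise in $t$, by the same quantity $t^{-1}g(A_t)^{-1}$, so the lemma reduces to a single scalar integral estimate. Concretely, (\ref{tok206}) with $s=1$ gives $\|\nabla K(t)\|_{L^2}\le Ct^{-1}g(A_t)^{-1}$; (\ref{tok206}) with $s=0$, squared, gives $\|K(t)\|_{L^2}^{2}\le Ct^{-1}g(A_t)^{-1}$; while (\ref{tok207}) and (\ref{tok208}) directly yield $\|K(t)\|_{L^\infty}+\|\nabla^{2}K(t)\|_{L^1}\le Ct^{-1}g(A_t)^{-1}$. Thus the whole lemma is reduced to establishing
$$\int_{0}^{T} t^{-1}g(A_t)^{-1}\,dt \le C(T).$$

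To prove this, I would perform the change of variable $r=A_t$, i.e.\ $t=(r^{2}g(r))^{-1}$. First I would verify this substitution is legitimate: since $g$ is smooth, positive and non-decreasing,
$$\frac{d}{dx}\bigl(x^{2}g(x)\bigr)=2xg(x)+x^{2}g'(x)\ge 2xg(x)>0,$$
so $x\mapsto x^{2}g(x)$ is a smooth bijection of $(0,\infty)$ onto itself, and consequently $t\mapsto A_t$ is smooth and strictly decreasing, sending $(0,T]$ onto $[A_T,\infty)$. Differentiating the relation $t=r^{-2}g(r)^{-1}$ and invoking the Mikhlin bound $|g'(r)|\le \widetilde{C}g(r)/r$ from condition (b), I get
$$\left|\frac{dt}{dr}\right|=\frac{2+rg'(r)/g(r)}{r^{3}g(r)}\le \frac{C}{r^{3}g(r)}.$$
Combining this with the identity $t^{-1}g(A_t)^{-1}=r^{2}g(r)\cdot g(r)^{-1}=r^{2}$ transforms the time integral into
$$\int_{0}^{T}t^{-1}g(A_t)^{-1}\,dt \le C\int_{A_T}^{\infty}\frac{dr}{rg(r)} = C\,C_T,$$
which is finite by the growth hypothesis (\ref{condi}).

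The main (and essentially only) conceptual step is recognizing that the reverse substitution $r=A_t$ converts the singular-looking integrand $t^{-1}g(A_t)^{-1}$ into precisely the density $1/(rg(r))$ appearing in hypothesis (\ref{condi}); after that, the Mikhlin regularity of $g$ is exactly what makes the Jacobian harmless. No delicate analysis at $t=0^{+}$ is needed since that endpoint corresponds to $r=\infty$, where finiteness is guaranteed by (\ref{condi}).
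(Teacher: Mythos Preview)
Your proof is correct and follows essentially the same route as the paper: both reduce all four quantities to the single pointwise bound $Ct^{-1}g(A_t)^{-1}$ via Lemmas~\ref{L32}--\ref{tL33}, and then perform the substitution $r=A_t$ (equivalently $t=r^{-2}g(r)^{-1}$) to convert the time integral into $\int_{A_T}^{\infty}\frac{dr}{rg(r)}$, controlled by hypothesis~(\ref{condi}).

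The only minor technical difference lies in how the Jacobian term containing $g'$ is handled. You invoke the Mikhlin bound (b), $|g'(r)|\le \widetilde{C}g(r)/r$, to absorb it into the main term. The paper instead keeps the exact Jacobian, splits off $\int_{A_T}^{\infty}g(R)^{-2}g'(R)\,dR$, evaluates it explicitly as $g(A_T)^{-1}-g(+\infty)^{-1}$, and then bounds $g(A_T)^{-1}=TA_T^{2}$ directly from the defining relation. Thus the paper does not actually use condition (b) in this lemma, whereas your argument does; conversely, your version avoids the extra boundary term $TA_T^{2}$. Either treatment is perfectly adequate here.
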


\begin{proof}[{Proof of Lemma \ref{khrdlyvhbg}}]
Making use of Lemma \ref{L32} and Lemma \ref{L33}, it follows that
$$
\|\nabla K(t)\|_{L^{2}}+
\| K(t)\|_{L^{2}}^{2}+
\|K(t)\|_{L^{\infty}}+
\|\nabla^{2} K(t)\|_{L^{1}}\leq Ct^{-1}g(R)^{-1},$$
where $R$ satisfies
$$
R^{2}g(R)=\frac{1}{t}.
$$
Taking $R$-derivative of the both sides of the above equation, it is easy to check
$$2Rg(R)+R^{2}g'(R)=-\frac{1}{t^{2}}\frac{dt}{dR}$$
or equivalently
$$dt=-t^{2}\big(2Rg(R)+R^{2}g'(R)\big)dR=-R^{-4}g(R)^{-2}\big(2Rg(R)+R^{2}g'(R)\big)dR.$$
By direct computations, we have that
\begin{align}
\int_{0}^{T}{t^{-1}g(R)^{-1}\,dt}&= -\int_{+\infty}^{A_{T}}{R^{-2}g(R)^{-2}\big(2Rg(R)+R^{2}g'(R)\big)\, dR}\nonumber\\&= \int_{A_{T}}^{+\infty}{R^{-2}g(R)^{-2}\big(2Rg(R)+R^{2}g'(R)\big)\, dR}
\nonumber\\&= 2\int_{A_{T}}^{+\infty}{R^{-1}g(R)^{-1}\, dR}+ \int_{A_{T}}^{+\infty}{g(R)^{-2}g'(R) \, dR}
\nonumber\\&= 2\int_{A_{T}}^{+\infty}{R^{-1}g(R)^{-1}\, dR}+g(A_{T})^{-1}-g(+\infty)^{-1}
\nonumber\\&\leq 2\int_{A_{T}}^{+\infty}{R^{-1}g(R)^{-1}\, dR}+TA_{T}^{2}\nonumber\\&\leq  C(T),\nonumber
\end{align}
where we have applied (\ref{condi}) and the fact $A_{T}^{2}g(A_{T})=\frac{1}{T}$ or $g(A_{T})^{-1}=TA_{T}^{2}$. Consequently, it is obvious to check that
$$
\int_{0}^{T}{
\|\nabla K(t)\|_{L^{2}}\,dt}+\int_{0}^{T}{
\| K(t)\|_{L^{2}}^{2}\,dt}+\int_{0}^{T}{
\|K(t)\|_{L^{\infty}}\,dt}+\int_{0}^{T}{
\|\nabla^{2} K(t)\|_{L^{1}}\,dt}\leq C(T).\nonumber
$$
This concludes the proof of the lemma.
\end{proof}

\vskip .2in
\section{ The proof of Theorem \ref{Th1}}\setcounter{equation}{0}
This section is devoted to the proof of Theorem \ref{Th1}, the global existence and unique-
ness of smooth solution to the system $(\ref{GMHD})$.
Before the proof, we will state a notation.
For a quasi-Banach space $X$ and for any $0<T\leq\infty$, we use standard notation $L^{p}(0,T;X)$ or $L_{T}^{p}(X)$ for
the quasi-Banach space of Bochner measurable functions $f$ from $(0,T)$ to $X$ endowed with the norm
\begin{equation*}
\|f\|_{L_{T}^{p}(X)}:=\left\{\aligned
&\left(\int_{0}^{T}{\|f(.,t)\|_{X}^{p}\,dt}\right)^{\frac{1}{p}}, \,\,\,\,\,1\leq p<\infty,\\
&\sup_{0\leq t\leq T}\|f(.,t)\|_{X},\qquad\qquad p=\infty.
\endaligned\right.
\end{equation*}

\vskip .2in
By the classical hyperbolic method, there exists a finite time $T_{0}$ such that the system (\ref{GMHD}) is local well-posedness in the interval $[0,\,T_{0}]$ in $H^{s}$ with $s>2$.
Thus, it is sufficient to establish {\it a priori} estimates in the interval $[0,\,T]$ for the given $T>T_{0}$.

\vskip .1in

We first state the basic $L^{2}$-estimate of the system (\ref{GMHD}).
\begin{lemma}\label{L31}
Let $(u_{0},b_{0})$ satisfy the conditions stated in Theorem \ref{Th1}, then it holds
\begin{eqnarray}\label{t301}
\|u(t)\|_{L^{2}}^{2}+\|b(t)\|_{L^{2}}^{2}+ 2\int_{0}^{t}{
\|\mathcal{L}^{\frac{1}{2}}b(\tau)\|_{L^{2}}^{2}\,d\tau}=\|u_{0}\|_{L^{2}}^{2}
+\|b_{0}\|_{L^{2}}^{2},
\end{eqnarray}
where $\mathcal{L}^{\frac{1}{2}}$ is defined by
$$\widehat{\mathcal{L}^{\frac{1}{2}}b}(\xi)=|\xi| \sqrt{g(\xi)}\,\widehat{b}(\xi).$$
\end{lemma}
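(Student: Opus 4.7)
The proof is a standard energy estimate for MHD, with the only mildly nonstandard ingredient being the handling of the Fourier multiplier $\mathcal{L}$ in place of a classical Laplacian.

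The plan is to test the velocity equation against $u$ and the magnetic equation against $b$ in $L^2$, sum the resulting identities, and exploit the two classical cancellations in incompressible MHD. First I would multiply $(\ref{GMHD})_1$ by $u$, integrate over $\mathbb{R}^2$, and use $\nabla\cdot u=0$ to kill the transport term $\int (u\cdot\nabla)u\cdot u\,dx=0$ and the pressure term $\int \nabla p\cdot u\,dx=0$. Similarly, testing $(\ref{GMHD})_2$ against $b$ and using $\nabla\cdot u=0$ gives $\int (u\cdot\nabla)b\cdot b\,dx=0$. This yields
\begin{align*}
\frac{1}{2}\frac{d}{dt}\|u\|_{L^2}^2 &= \int_{\mathbb{R}^2}(b\cdot\nabla)b\cdot u\,dx,\\
\frac{1}{2}\frac{d}{dt}\|b\|_{L^2}^2 + \int_{\mathbb{R}^2}(\mathcal{L}b)\cdot b\,dx &= \int_{\mathbb{R}^2}(b\cdot\nabla)u\cdot b\,dx.
\end{align*}

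Next I would observe the key MHD cancellation: an integration by parts, using $\nabla\cdot b=0$, shows
\[
\int_{\mathbb{R}^2}(b\cdot\nabla)b\cdot u\,dx + \int_{\mathbb{R}^2}(b\cdot\nabla)u\cdot b\,dx = 0,
\]
so that adding the two energy identities eliminates all nonlinear contributions. For the dissipation term, Plancherel's theorem and the definition of $\mathcal{L}$ give
\[
\int_{\mathbb{R}^2}(\mathcal{L}b)\cdot b\,dx = \int_{\mathbb{R}^2}|\xi|^2 g(\xi)\,|\widehat{b}(\xi)|^2\,d\xi = \|\mathcal{L}^{1/2}b\|_{L^2}^2,
\]
which is nonnegative thanks to condition (a).

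Summing the two identities and integrating in time from $0$ to $t$ then produces the claimed equality. There is no real obstacle here; the only point requiring a mild justification is that the above manipulations are performed at the level of a sufficiently smooth solution (guaranteed by the local existence stated at the beginning of Section 3), so that all integrations by parts and the use of Plancherel are rigorously valid, after which the identity extends by density to the solution class under consideration.
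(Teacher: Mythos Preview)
Your proof is correct and follows essentially the same approach as the paper: test the two equations against $u$ and $b$ respectively, use the standard MHD cancellation $\int (b\cdot\nabla)b\cdot u\,dx + \int (b\cdot\nabla)u\cdot b\,dx = 0$, identify the dissipation term via Plancherel as $\|\mathcal{L}^{1/2}b\|_{L^2}^2$, and integrate in time. The only difference is that you spell out explicitly why the transport and pressure terms vanish, which the paper leaves implicit.
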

\begin{proof}[{Proof of  Lemma \ref{L31}}]
Taking the inner products of $(\ref{GMHD})_{1}$ with $u$ and
$(\ref{GMHD})_{2}$ with $b$, adding
the results and invoking the following cancelation identity
$$\int_{\mathbb{R}^{2}} {(b \cdot \nabla)b\cdot u
\,dx}+\int_{\mathbb{R}^{2}}  { (b \cdot \nabla)u\cdot b\,dx}=0,$$
we obtain that
\begin{eqnarray}
\frac{1}{2}\frac{d}{dt}(\|u(t)\|_{L^{2}}^{2}+\|b(t)\|_{L^{2}}^{2})+\|\mathcal{L}^{\frac{1}{2}}b
\|_{L^{2}}^{2}=0,\nonumber
\end{eqnarray}
where we have used the following estimate due to the Plancherel theorem
\begin{eqnarray}
\int_{\mathbb{R}^{2}}{\mathcal{L}b\,b\,dx}=\int_{\mathbb{R}^{2}}{\widehat{\mathcal{L}b}(\xi)\,\widehat{b}(\xi)
\,d\xi}=\int_{\mathbb{R}^{2}}{|\xi|^{2} {g(\xi)}|\widehat{b}(\xi)|^{2}
\,d\xi}=\int_{\mathbb{R}^{2}}{|\widehat{\mathcal{L}^{\frac{1}{2}}b}(\xi)|^{2}\,d\xi}
=\|\mathcal{L}^{\frac{1}{2}}b\|_{L^{2}}^{2}.\nonumber
\end{eqnarray}
Integrating the above inequality implies (\ref{t301}). This ends the proof of  Lemma \ref{L31}.
 \end{proof}

\vskip .1in
In order to get the $H^{1}$ estimate on $(u, b)$, we apply
$\nabla\times$ to the MHD equations (\ref{GMHD}) to obtain the
governing equations for the vorticity $\omega:=\nabla\times u=\partial_{x_{1}}u_{2}-\partial_{x_{2}}u_{1}$ and the current $j:=\nabla\times b=\partial_{x_{1}}b_{2}-\partial_{x_{2}}b_{1}$ as
follows
\begin{equation}\label{VMHD}
\left\{\aligned
&\partial_{t}\omega+(u\cdot\nabla)\omega=(b\cdot\nabla) j,\\
&\partial_{t}j+(u\cdot\nabla)j+\mathcal{L}j=(b\cdot\nabla)\omega+T(\nabla u, \nabla b),
\endaligned \right.
\end{equation}
where $T(\nabla u, \nabla
b)=2\partial_{x_{1}}b_{1}(\partial_{x_{2}}u_{1}+\partial_{x_{1}}u_{2})
-2\partial_{x_{1}}u_{1}(\partial_{x_{2}}b_{1}+\partial_{x_{1}}b_{2}).$

\vskip .1in
We now prove that any classical solution of (\ref{GMHD}) admits a global
$H^1$-bound, as stated in the following lemma.
\begin{lemma}\label{tL32}
Let $(u_{0},b_{0})$ satisfy the conditions stated in Theorem \ref{Th1}, then it holds for any $t\in[0,\,T]$
\begin{eqnarray}\label{t303}
\|\omega(t)\|_{L^{2}}^{2}+\|j(t)\|_{L^{2}}^{2}+ \int_{0}^{t}{
\|\mathcal{L}^{\frac{1}{2}}j(\tau)\|_{L^{2}}^{2}\,d\tau}\leq C(T,u_{0},b_{0}).
\end{eqnarray}
\end{lemma}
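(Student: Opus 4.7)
The plan is to work at the level of the vorticity-current system \eqref{VMHD}, combine a basic energy identity with the kernel integrability of Lemma \ref{khrdlyvhbg}, and close the estimate through a (possibly logarithmic) Gr\"onwall argument. The goal is to keep the energy structure transparent while exploiting the fact that, under \eqref{condi}, the almost-Laplacian operator $\mathcal{L}$ retains just enough smoothing to control $\nabla b$ in time.

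First I would take the $L^{2}$ inner product of $(\ref{VMHD})_{1}$ with $\omega$ and of $(\ref{VMHD})_{2}$ with $j$, and add. The pure transport terms vanish because $\nabla\cdot u=0$, and the two magnetic-stretching contributions cancel as a pair after one integration by parts using $\nabla\cdot b=0$:
\[
\int_{\mathbb{R}^{2}}(b\cdot\nabla)j\,\omega\,dx+\int_{\mathbb{R}^{2}}(b\cdot\nabla)\omega\,j\,dx=0.
\]
Applying Plancherel to $\int\mathcal{L}j\cdot j\,dx=\|\mathcal{L}^{1/2}j\|_{L^{2}}^{2}$ exactly as in Lemma \ref{L31}, this gives the identity
\[
\tfrac{1}{2}\tfrac{d}{dt}\bigl(\|\omega\|_{L^{2}}^{2}+\|j\|_{L^{2}}^{2}\bigr)+\|\mathcal{L}^{1/2}j\|_{L^{2}}^{2}=\int_{\mathbb{R}^{2}}T(\nabla u,\nabla b)\,j\,dx=:\mathcal{I}.
\]
Since $T(\nabla u,\nabla b)$ is pointwise bilinear in the first derivatives of $u$ and $b$, H\"older combined with the two-dimensional identity $\|\nabla u\|_{L^{2}}\simeq\|\omega\|_{L^{2}}$ yields
\[
|\mathcal{I}|\le C\|\nabla u\|_{L^{2}}\|\nabla b\|_{L^{\infty}}\|j\|_{L^{2}}\le C\|\nabla b\|_{L^{\infty}}\bigl(\|\omega\|_{L^{2}}^{2}+\|j\|_{L^{2}}^{2}\bigr).
\]
Thus the problem reduces to showing $\int_{0}^{T}\|\nabla b(\tau)\|_{L^{\infty}}\,d\tau<\infty$: once this is available, Gr\"onwall's inequality closes \eqref{t303}, and reintegrating the identity in time also produces the $L^{2}_{T}$ bound for $\mathcal{L}^{1/2}j$.

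The step that I expect to be the main obstacle is producing an integrable bound for $\|\nabla b\|_{L^{\infty}}$ when the velocity is inviscid and the magnetic dissipation is only slightly stronger than $-\Delta$. Using $\nabla\cdot u=\nabla\cdot b=0$ to rewrite the nonlinearity in the $b$-equation as $\nabla\cdot(b\otimes u-u\otimes b)$, the Duhamel formula \eqref{solve} gives
\[
\nabla b(t)=\nabla K(t)\ast b_{0}+\int_{0}^{t}\nabla^{2}K(t-\tau)\ast\bigl(b\otimes u-u\otimes b\bigr)(\tau)\,d\tau,
\]
so that Young's inequality yields
\[
\|\nabla b(t)\|_{L^{\infty}}\le\|\nabla K(t)\|_{L^{2}}\|b_{0}\|_{L^{2}}+\int_{0}^{t}\|\nabla^{2}K(t-\tau)\|_{L^{1}}\|u(\tau)\|_{L^{\infty}}\|b(\tau)\|_{L^{\infty}}\,d\tau.
\]
By Lemma \ref{khrdlyvhbg}, both $\|\nabla K(\cdot)\|_{L^{2}}$ and $\|\nabla^{2}K(\cdot)\|_{L^{1}}$ belong to $L^{1}(0,T)$, so bounding $\|\nabla b\|_{L^{\infty}}$ in $L^{1}_{T}$ reduces to controlling $\|u\|_{L^{\infty}}\|b\|_{L^{\infty}}$. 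Because $H^{1}(\mathbb{R}^{2})\not\hookrightarrow L^{\infty}(\mathbb{R}^{2})$, this must be done carefully: I would estimate $\|b\|_{L^{\infty}}$ by a second, parallel Duhamel argument invoking $\|K\|_{L^{\infty}}\in L^{1}_{T}$ from Lemma \ref{khrdlyvhbg}, and control $\|u\|_{L^{\infty}}$ through a logarithmic Brezis-Wainger interpolation between $\|u\|_{H^{1}}$ and the higher Sobolev norm $\|u\|_{H^{s}}$ coming from local existence. Feeding these estimates back produces a logarithmic Gr\"onwall inequality of the form $Y'(t)\le C\Phi(t)\,Y(t)\log\!\bigl(e+Y(t)\bigr)$ with $\Phi\in L^{1}(0,T)$, whose solution remains finite on $[0,T]$ thanks to \eqref{condi}; this yields \eqref{t303} and completes the proof.
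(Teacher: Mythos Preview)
Your energy identity is correct, but the way you estimate $\mathcal{I}$ sends you down a circular path. By placing $\nabla b$ in $L^{\infty}$ you are forced to control $\|\nabla b\|_{L^{\infty}}$, which via your Duhamel argument needs $\|u\|_{L^{\infty}}\|b\|_{L^{\infty}}$; you then propose to handle $\|u\|_{L^{\infty}}$ by Brezis--Wainger interpolation with $\|u\|_{H^{s}}$. But $\|u\|_{H^{s}}$ is \emph{not} available at this stage: in the paper's architecture the $H^{s}$ bound is the very last step, obtained only after $\|\omega\|_{L^{\infty}}$ and $\int_{0}^{T}\|\nabla b\|_{L^{\infty}}^{2}\,d\tau$ have been secured (Lemma~\ref{L35}), and those in turn rest on the $H^{1}$ estimate you are trying to prove. ``Local existence'' gives finiteness of $\|u\|_{H^{s}}$ only on $[0,T_{0}]$, not a quantitative bound on $[0,T]$, so the log--Gr\"onwall inequality you write down is not closed: the logarithmic factor involves $\|u\|_{H^{s}}$, which is not a function of $Y=\|\omega\|_{L^{2}}^{2}+\|j\|_{L^{2}}^{2}$.

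The paper avoids all of this by a much more elementary H\"older split: instead of $L^{2}\times L^{\infty}\times L^{2}$, it uses
\[
\mathcal{I}\le C\|\nabla u\|_{L^{2}}\|\nabla b\|_{L^{4}}\|j\|_{L^{4}}\le C\|\omega\|_{L^{2}}\|j\|_{L^{4}}^{2}\le C\|\omega\|_{L^{2}}\|j\|_{L^{2}}\|\nabla j\|_{L^{2}},
\]
the last step being the two-dimensional Ladyzhenskaya inequality. The crucial point, which you are missing, is the frequency-side estimate \eqref{adduse},
\[
\|\nabla\phi\|_{L^{2}}\le C_{1}\|\phi\|_{L^{2}}+C_{2}\|\mathcal{L}^{1/2}\phi\|_{L^{2}},
\]
valid because $g$ is nondecreasing. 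Applying this with $\phi=j$ (and $\phi=b$) lets one absorb $\|\nabla j\|_{L^{2}}$ into the dissipative term $\|\mathcal{L}^{1/2}j\|_{L^{2}}^{2}$, leaving a Gr\"onwall coefficient $C(1+\|b\|_{L^{2}}^{2}+\|\mathcal{L}^{1/2}b\|_{L^{2}}^{2})$ that is integrable by Lemma~\ref{L31} alone. No kernel bounds, no Duhamel, no $L^{\infty}$ control, and no logarithmic correction are needed here; those tools enter only in the subsequent lemmas.
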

\begin{proof}[{Proof of  Lemma \ref{tL32}}]
Taking the inner products of $(\ref{VMHD})_{1}$ with $\omega$, $(\ref{VMHD})_{2}$ with $j$, adding them up and using the incompressible condition as well as the following fact
$$\int_{\mathbb{R}^{2}} {(b \cdot \nabla j) \omega
\,dx}+\int_{\mathbb{R}^{2}}  { (b \cdot \nabla\omega) j\,dx}=0,$$
it yields
\begin{eqnarray}
\frac{1}{2}\frac{d}{dt}(\|\omega(t)\|_{L^{2}}^{2}+\|j(t)\|_{L^{2}}^{2})
+\|\mathcal{L}^{\frac{1}{2}}j\|_{L^{2}}^{2}=\int_{\mathbb{R}^{2}} {T(\nabla u, \nabla
b)j
\,dx}.\nonumber
\end{eqnarray}
We obtain by direct computations that
\begin{align}
\|\nabla \phi\|_{L^{2}}^{2}
&= C\|\xi \widehat{\phi}(\xi)\|_{L^{2}}^{2}\nonumber\\
&= C\int_{\mathbb{R}^{2}}|\xi|^{2}|\widehat{\phi}(\xi)|^{2}\,d\xi
\nonumber\\
&= C\int_{|\xi|\leq 1}|\xi|^{2}|\widehat{\phi}(\xi)|^{2}\,d\xi+C\int_{|\xi|\geq 1}|\xi|^{2}|\widehat{\phi}(\xi)|^{2}\,d\xi
\nonumber\\
&= C\int_{|\xi|\leq 1}|\widehat{\phi}(\xi)|^{2}\,d\xi+C\int_{|\xi|\geq 1}\frac{1}{g(\xi)}|\xi|^{2}g(\xi)|\widehat{\phi}(\xi)|^{2}\,d\xi
\nonumber\\
&\leq C\int_{\mathbb{R}^{2}}|\widehat{\phi}(\xi)|^{2}\,d\xi+C\int_{|\xi|\geq 1}\frac{1}{g(1)}|\xi|^{2}g(\xi)|\widehat{\phi}(\xi)|^{2}\,d\xi
\nonumber\\
&\leq C_{1}\|\phi\|_{L^{2}}^{2}+C_{2}\|\mathcal{L}^{\frac{1}{2}}\phi\|_{L^{2}}^{2},
\nonumber
\end{align}
which implies
\begin{eqnarray}\label{adduse}
\|\nabla \phi\|_{L^{2}}\leq C_{1}\|\phi\|_{L^{2}}+C_{2}\|\mathcal{L}^{\frac{1}{2}}\phi\|_{L^{2}}.
\end{eqnarray}
Therefore, we get by virtue of the easy interpolation inequality that
\begin{align}
\int_{\mathbb{R}^{2}} {T(\nabla u, \nabla
b)j
\,dx}&\leq C\|\nabla u\|_{L^{2}}\|\nabla b\|_{L^{4}}\|j\|_{L^{4}}\nonumber\\
&\leq C\|\omega\|_{L^{2}}\|j\|_{L^{4}}^{2}\nonumber\\
&\leq C\|\omega\|_{L^{2}}\|j\|_{L^{2}}\|\nabla j\|_{L^{2}}\nonumber\\
&\leq C\|\omega\|_{L^{2}}\|\nabla b\|_{L^{2}}\|\nabla j\|_{L^{2}}\nonumber\\
&\leq C\|\omega\|_{L^{2}}(\|b\|_{L^{2}}+ \|\mathcal{L}^{\frac{1}{2}}b\|_{L^{2}})
(\|j\|_{L^{2}}+ \|\mathcal{L}^{\frac{1}{2}}j\|_{L^{2}})\nonumber\\
&\leq \frac{1}{2}\|\mathcal{L}^{\frac{1}{2}}j\|_{L^{2}}^{2}
+C(1+\|b\|_{L^{2}}^{2}+\|\mathcal{L}^{\frac{1}{2}}b\|_{L^{2}}^{2})(\|\omega\|_{L^{2}}^{2}+\|j\|_{L^{2}}^{2}).\nonumber
\end{align}
As a result, one has
$$\frac{d}{dt}(\|\omega(t)\|_{L^{2}}^{2}+\|j(t)\|_{L^{2}}^{2})
+\|\mathcal{L}^{\frac{1}{2}}j\|_{L^{2}}^{2}\leq C(1+\|b\|_{L^{2}}^{2}+\|\mathcal{L}^{\frac{1}{2}}b\|_{L^{2}}^{2})(\|\omega\|_{L^{2}}^{2}+\|j\|_{L^{2}}^{2}).$$
From the classical Gronwall lemma and (\ref{t301}), we easily get (\ref{t303}). The proof of the lemma is now achieved.
\end{proof}
\vskip .1in

Making use of the global bounds obtained
in Lemma \ref{tL32}, we obtain a global bound for the $L_{t}^{\infty}L^{\infty}$-norm of $b$ as follows.
\begin{lemma}\label{L33}
Let $(u_{0},b_{0})$ satisfy the conditions stated in Theorem \ref{Th1}, then it holds for any $t\in[0,\,T]$
\begin{eqnarray}\label{t304}
\|b\|_{L_{t}^{\infty}L^{\infty}}\leq C(T,u_{0},b_{0}).
\end{eqnarray}
\end{lemma}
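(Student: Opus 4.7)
The plan is to derive the $L^\infty$ bound on $b$ by applying Duhamel's formula to the magnetic equation and then controlling the resulting convolutions using the kernel estimates from Lemma \ref{khrdlyvhbg} together with the $H^1$ bounds of Lemma \ref{tL32}. Using $\nabla\cdot u=0$ and $\nabla\cdot b=0$, I would first rewrite the $b$-equation in semilinear form
\begin{equation*}
\partial_t b+\mathcal{L}b=(b\cdot\nabla)u-(u\cdot\nabla)b,
\end{equation*}
so that Duhamel gives
\begin{equation*}
b(t)=K(t)\ast b_0+\int_{0}^{t}K(t-\tau)\ast\bigl[(b\cdot\nabla)u-(u\cdot\nabla)b\bigr](\tau)\,d\tau.
\end{equation*}

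For the linear part, since $|\widehat{K}(\xi,t)|=e^{-t|\xi|^{2}g(\xi)}\le 1$, the operator $K(t)\ast$ is bounded on every Sobolev space; combined with the 2D embedding $H^{s}\hookrightarrow L^{\infty}$ for $s>1$, this yields $\|K(t)\ast b_0\|_{L^{\infty}}\le C\|b_0\|_{H^{s}}$ uniformly in $t$. For the nonlinear part, I would use Young's convolution inequality to pair $\|K(t-\tau)\|_{L^{\infty}}$ with the $L^{1}$ norm of the source:
\begin{equation*}
\|K(t-\tau)\ast[(b\cdot\nabla)u-(u\cdot\nabla)b]\|_{L^{\infty}}
\le \|K(t-\tau)\|_{L^{\infty}}\bigl(\|(b\cdot\nabla)u\|_{L^{1}}+\|(u\cdot\nabla)b\|_{L^{1}}\bigr).
\end{equation*}
By Cauchy--Schwarz and the identity $\|\nabla u\|_{L^{2}}=\|\omega\|_{L^{2}}$ for a 2D divergence-free vector field (and similarly for $b$), the $L^{1}$ quantity in parentheses is controlled by $\|b\|_{L^{2}}\|\omega\|_{L^{2}}+\|u\|_{L^{2}}\|j\|_{L^{2}}$, which is uniformly bounded on $[0,T]$ by Lemma \ref{L31} and Lemma \ref{tL32}.

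Integrating in $\tau$ and invoking the time-integrability $\int_{0}^{T}\|K(t)\|_{L^{\infty}}\,dt\le C(T)$ from Lemma \ref{khrdlyvhbg}, I would conclude
\begin{equation*}
\|b(t)\|_{L^{\infty}}\le C\|b_0\|_{H^{s}}+C(T,u_0,b_0)\int_{0}^{t}\|K(t-\tau)\|_{L^{\infty}}\,d\tau\le C(T,u_0,b_0),
\end{equation*}
uniformly for $t\in[0,T]$, which is exactly \eqref{t304}. A slightly different route would instead shift one derivative onto the kernel (writing the nonlinearity as $\nabla\cdot(b\otimes u-u\otimes b)$) and pair $\|\nabla K(t-\tau)\|_{L^{2}}$ with $\|u\otimes b-b\otimes u\|_{L^{2}}\lesssim\|u\|_{H^{1}}\|b\|_{H^{1}}$ using the 2D Sobolev embedding $H^{1}\hookrightarrow L^{4}$; this would rely on $\int_{0}^{T}\|\nabla K(t)\|_{L^{2}}\,dt\le C(T)$ from the same lemma.

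The essential step, and the main obstacle in spirit, is identifying a pairing of kernel norm and nonlinearity norm for which both the kernel quantity is time-integrable on $[0,T]$ (this is precisely what Lemma \ref{khrdlyvhbg} buys us under the Dini-type condition \eqref{condi} on $g$) and the nonlinearity norm is already controlled by the energy/$H^{1}$ bounds; everything else reduces to standard Young-type estimates and 2D Sobolev embeddings.
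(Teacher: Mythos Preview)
Your proposal is correct and follows essentially the same approach as the paper: Duhamel's formula for the $b$-equation combined with the kernel time-integrability from Lemma \ref{khrdlyvhbg} and the $H^{1}$ bounds from Lemmas \ref{L31}--\ref{tL32}. The paper in fact uses precisely your ``slightly different route'' (writing the nonlinearity in divergence form and pairing $\|\nabla K(t-\tau)\|_{L^{2}}$ with $\|u\|_{L^{4}}\|b\|_{L^{4}}$), while your primary $L^{\infty}$--$L^{1}$ pairing is an equally valid variant relying on another term of Lemma \ref{khrdlyvhbg}.
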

\begin{proof}[{Proof of  Lemma \ref{L33}}]
we rewrite the second equation of (\ref{GMHD}) as
$$\partial_{t}b+\mathcal{L}b=\nabla\cdot(b\otimes u)-\nabla\cdot
(u\otimes b).$$
By (\ref{solve}), we can check that
$$b(t)=K(t)\ast b_{0}+\int_{0}^{t}{K(t-\tau)\ast [\nabla\cdot(b\otimes u)-\nabla\cdot
(u\otimes b)](\tau)\,d\tau}.$$
Taking $L^{\infty}$-norm in terms of space variable and using the Young inequality, it implies
\begin{align}\|b(t)\|_{L^{\infty}}&\leq \|K(t)\ast b_{0}\|_{L^{\infty}}+\int_{0}^{t}{\|\nabla K(t-\tau)\ast [(b\otimes u)-
(u\otimes b)](\tau)\|_{L^{\infty}}\,d\tau}\nonumber\\&\leq \|\widehat{K(t)\ast b_{0}}\|_{L^{1}}+C\int_{0}^{t}{\|\nabla K(t-\tau)\|_{L^{2}}\|[(b\otimes u)-
(u\otimes b)](\tau)\|_{L^{2}}\,d\tau}\nonumber\\&\leq \|\widehat{K(t)} \widehat{b_{0}}\|_{L^{1}}+C\int_{0}^{t}{\|\nabla K(t-\tau)\|_{L^{2}}\|u(\tau)\|_{L^{4}}\|b(\tau)\|_{L^{4}}\,d\tau}
\nonumber\\&\leq  \|\widehat{b_{0}}\|_{L^{1}}+C\|\nabla K(\tau)\|_{L_{t}^{1}L^{2}}\|u(\tau)\|_{L_{t}^{\infty}L^{4}}
\|b(\tau)\|_{L_{t}^{\infty}L^{4}}
\nonumber\\&\leq C \|b_{0}\|_{L^{2}}^{\frac{1}{2}}\|b_{0}\|_{\dot{H}^{2}}^{\frac{1}{2}}+C(t,u_{0},b_{0})\nonumber\\&\leq  C(t,u_{0},b_{0}),\nonumber
\end{align}
where we have used \eqref{tdsfsdgervv} and the following fact
\begin{eqnarray}\label{t305}
 \int_{0}^{t}{
\|\nabla K(\tau)\|_{L^{2}}\,d\tau}\leq C(t).
\end{eqnarray}
This allows us to derive  
\begin{eqnarray} 
\|b\|_{L_{t}^{\infty}L^{\infty}}\leq C(t,u_{0},b_{0}),\nonumber
\end{eqnarray}
which is the desired bound \eqref{t304}.
Consequently, the proof of  Lemma \ref{L33} is completed.
\end{proof}
\vskip .1in

With the help of the global bounds obtained
in the above lemmas, we obtain a global bound for the $L_{t}^{2}L^{\infty}$-norm of $j$.
\begin{lemma}\label{L34}
Let $(u_{0},b_{0})$ satisfy the conditions stated in Theorem \ref{Th1}, then it holds for any $t\in[0,\,T]$
\begin{eqnarray}\label{t306}
\int_{0}^{t}{
\|j(\tau)\|_{L^{\infty}}^{2}\,d\tau}\leq C(T,u_{0},b_{0}).
\end{eqnarray}
\end{lemma}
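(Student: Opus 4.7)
The plan is to represent $j$ via the Duhamel formula associated with $\partial_t j + \mathcal L j = f$ and then reduce each source term to a form in which the kernel bounds of Lemma \ref{khrdlyvhbg} apply. Using the incompressibility conditions $\nabla\cdot u = \nabla\cdot b = 0$, I first rewrite $(\ref{VMHD})_{2}$ in flux form
$$\partial_t j + \mathcal{L}j = -\nabla\cdot(uj) + \nabla\cdot(b\omega) + T(\nabla u,\nabla b),$$
so that by \eqref{solve}
$$j(t) = K(t)\ast j_0 - \int_0^t \nabla K(t-\tau)\ast(uj)(\tau)\,d\tau + \int_0^t \nabla K(t-\tau)\ast(b\omega)(\tau)\,d\tau + \int_0^t K(t-\tau)\ast T(\tau)\,d\tau.$$

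For the linear piece I would use $\|K(t)\ast j_0\|_{L^\infty}\le \|K(t)\|_{L^2}\|j_0\|_{L^2}$ together with $\int_0^T\|K(t)\|_{L^2}^2\,dt<\infty$ from Lemma \ref{khrdlyvhbg}. For the two convective contributions I push the spatial derivative onto the kernel and combine Young in $x$ with Minkowski--Young in $t$:
$$\Big\|\int_0^{\cdot}\nabla K(\cdot-\tau)\ast(uj)(\tau)\,d\tau\Big\|_{L^2_T L^\infty_x} \le \|\nabla K\|_{L^1_T L^2_x}\,\|uj\|_{L^2_T L^2_x},$$
with the analogous estimate for the $b\omega$ term. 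The kernel factor is finite by Lemma \ref{khrdlyvhbg}. For $\|uj\|_{L^2_T L^2_x}$, H\"older gives $\|uj\|_{L^2}\le \|u\|_{L^4}\|j\|_{L^4}$; the two-dimensional embedding $H^1\hookrightarrow L^4$ together with Lemma \ref{tL32} controls $\|u\|_{L^\infty_T L^4}$; and the Gagliardo--Nirenberg inequality $\|j\|_{L^4}^2\le C\|j\|_{L^2}\|\nabla j\|_{L^2}$, combined with \eqref{adduse} and the $\mathcal L^{\frac{1}{2}}j\in L^2_T L^2$ bound from Lemma \ref{tL32}, places $\|j\|_{L^4}$ in $L^2_T$. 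The $b\omega$ piece is simpler: $\|b\omega\|_{L^2}\le \|b\|_{L^\infty}\|\omega\|_{L^2}$, and both factors are already globally bounded by Lemmas \ref{L33} and \ref{tL32}.

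The subtlest term is $T(\nabla u,\nabla b)$, which is not in divergence form. Rather than integrating by parts (which would produce uncontrollable second derivatives), I would pair $K$ in $L^\infty_x$ with $T$ in $L^1_x$: from $\|T\|_{L^1}\le C\|\nabla u\|_{L^2}\|\nabla b\|_{L^2}$, the right-hand side is globally bounded via Lemma \ref{tL32}, and Young in time yields
$$\Big\|\int_0^{\cdot} K(\cdot-\tau)\ast T(\tau)\,d\tau\Big\|_{L^\infty_T L^\infty_x} \le \|K\|_{L^1_T L^\infty_x}\,\|T\|_{L^\infty_T L^1_x}<\infty,$$
using $\int_0^T\|K(t)\|_{L^\infty}\,dt<\infty$ from Lemma \ref{khrdlyvhbg}. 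I expect this step to be the main obstacle: it is precisely the finiteness of $\|K\|_{L^1_T L^\infty_x}$ that forces the growth condition \eqref{condi}, so any further weakening of $\mathcal L$ breaks this estimate first. Summing the four contributions, and noting that an $L^\infty_T L^\infty_x$ bound trivially implies the $L^2_T L^\infty_x$ bound on a finite interval, yields \eqref{t306}.
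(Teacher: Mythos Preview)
Your proposal is correct and follows essentially the same route as the paper's proof: Duhamel for $j$, move the derivative onto the kernel for the flux terms, pair $K$ with $T(\nabla u,\nabla b)$ via $L^\infty_x\times L^1_x$, and close using the kernel bounds of Lemma~\ref{khrdlyvhbg} together with Lemmas~\ref{tL32}--\ref{L33}. The only differences are cosmetic choices of H\"older exponents in time (the paper uses $L^4_TL^4\times L^4_TL^4$ for $uj$ and puts $T(\nabla u,\nabla b)$ in $L^2_TL^1_x$ rather than $L^\infty_TL^1_x$), which do not affect the argument.
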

\begin{proof}[{Proof of  Lemma \ref{L34}}]
We rewrite the second equation of (\ref{VMHD}) as
\begin{eqnarray} \partial_{t}j+\mathcal{L}j=\nabla\cdot(b\otimes \omega)-\nabla\cdot
(u\otimes j)+T(\nabla u, \nabla b).\nonumber\end{eqnarray}
We again make use of (\ref{solve}) to deduce
\begin{eqnarray}\label{jequ}j(t)=K(t)\ast j_{0}+\int_{0}^{t}{K(t-\tau)\ast [\nabla\cdot(b\otimes \omega)-\nabla\cdot
(u\otimes j)+T(\nabla u, \nabla b)](\tau)\,d\tau}.\end{eqnarray}
Taking $L^{\infty}$-norm in terms of space variable and using the Young inequality, we conclude
\begin{align}
\|j(t)\|_{L^{\infty}}&\leq \|K(t)\ast j_{0}\|_{L^{\infty}}+\int_{0}^{t}{\|\nabla K(t-\tau)\ast [(b\otimes \omega)-
(u\otimes j)](\tau)\|_{L^{\infty}}\,d\tau}\nonumber\\& \quad+
\int_{0}^{t}{\| K(t-\tau)\ast T(\nabla u, \nabla b)(\tau)\|_{L^{\infty}}\,d\tau}
\nonumber\\&\leq C\|K(t)\|_{L^{2}}\| j_{0}\|_{L^{2}}+C\int_{0}^{t}{\| K(t-\tau)\|_{L^{\infty}} \|T(\nabla u, \nabla b)(\tau)\|_{L^{1}}\,d\tau}
\nonumber\\&\quad+C\int_{0}^{t}{\|\nabla K(t-\tau)\|_{L^{2}}(\|(b\otimes \omega)(\tau)\|_{L^{2}}+\|
(u\otimes j)(\tau)\|_{L^{2}})\,d\tau}.\nonumber
\end{align}
Take $L^{2}$-norm in terms of time variable and use the convolution Young inequality as well as the estimates (\ref{t304})-(\ref{t306}) to show
\begin{align}
\|j(t)\|_{L_{t}^{2}L^{\infty}}&\leq C\|K(t)\|_{L_{t}^{2}L^{2}}\| j_{0}\|_{L^{2}}+C\|K(t)\|_{L_{t}^{1}L^{\infty}}\|T(\nabla u, \nabla b)\|_{L_{t}^{2}L^{1}}\nonumber\\&\quad+C\|\nabla K(t)\|_{L_{t}^{1}L^{2}}(\|b\otimes \omega\|_{L_{t}^{2}L^{2}}+\|u\otimes j\|_{L_{t}^{2}L^{2}})\nonumber\\&\leq
C\|K(t)\|_{L_{t}^{2}L^{2}}\| j_{0}\|_{L^{2}}+C\|K(t)\|_{L_{t}^{1}L^{\infty}}\|\omega\|_{L_{t}^{2}L^{2}}
\|j\|_{L_{t}^{2}L^{2}}\nonumber\\&\quad+C\|\nabla K(t)\|_{L_{t}^{1}L^{2}}(\|b\|_{L_{t}^{2}L^{\infty}} \|\omega\|_{L_{t}^{\infty}L^{2}}+\|u\|_{L_{t}^{4}L^{4}} \|j\|_{L_{t}^{4}L^{4}})
\nonumber\\&\leq C(T,u_{0},b_{0}),\nonumber
\end{align}
where we have applied the following facts:
\begin{eqnarray}
\int_{0}^{t}{
\|K(\tau)\|_{L^{2}}^{2}\,d\tau}\leq C(T,u_{0},b_{0}),\qquad\int_{0}^{t}{
\|K(\tau)\|_{L^{\infty}}\,d\tau}\leq C(T,u_{0},b_{0}),\nonumber
\end{eqnarray}
$$\|u\|_{L_{t}^{4}L^{4}} \leq C\|u\|_{L_{t}^{\infty}L^{2}}^{\frac{1}{2}}\|\omega\|_{L_{t}^{2}L^{2}}^{\frac{1}{2}}\leq C(T,u_{0},b_{0})$$
and by (\ref{adduse})
\begin{align}
\|j\|_{L_{t}^{4}L^{4}}&\leq  C\|j\|_{L_{t}^{\infty}L^{2}}^{\frac{1}{2}}\|\nabla j\|_{L_{t}^{2}L^{2}}^{\frac{1}{2}}\nonumber\\&\leq C\|j\|_{L_{t}^{\infty}L^{2}}^{\frac{1}{2}}(\|j\|_{L_{t}^{2}L^{2}}^{\frac{1}{2}}
+\|\mathcal{L}^{\frac{1}{2}}j\|_{L_{t}^{2}L^{2}}^{\frac{1}{2}})
\nonumber\\&\leq C(T,u_{0},b_{0}).\nonumber
\end{align}
This completes the proof of Lemma \ref{L34}.
\end{proof}
\vskip .1in
The next lemma plays a significant role in obtaining the global bound for $(u,\,b)$ in $H^{s}$ with $s>2$ for the system (\ref{GMHD}).
\begin{lemma}\label{L35}
Let $(u_{0},b_{0})$ satisfy the conditions stated in Theorem \ref{Th1}, then it holds for any $t\in[0,\,T]$
\begin{eqnarray}\label{t311}
\int_{0}^{t}{
\|\nabla j(\tau)\|_{L^{\infty}} \,d\tau}\leq C(T,u_{0},b_{0}),
\end{eqnarray}
\begin{eqnarray}\label{addt311}\|\omega(t)\|_{L^{\infty}}\leq C(T,u_{0},b_{0}),\end{eqnarray}
\begin{eqnarray}\label{adddt311}\int_{0}^{t}{
\|\nabla b(\tau)\|_{L^{\infty}}^{2} \,d\tau}\leq C(T,u_{0},b_{0}).\end{eqnarray}
\end{lemma}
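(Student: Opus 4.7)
The three bounds are established in sequence: (\ref{t311}) is the crux of the matter, (\ref{addt311}) is an immediate consequence, and (\ref{adddt311}) follows from a parallel Duhamel argument applied to $b$.

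For (\ref{t311}), my plan is to apply $\nabla$ to the integral representation (\ref{jequ}) for $j$, transferring one derivative onto the kernel, to obtain
\begin{equation*}
\nabla j(t) = \nabla K(t)\ast j_0 + \int_0^t \nabla^2 K(t-\tau)\ast\bigl[(b\otimes\omega)-(u\otimes j)\bigr](\tau)\,d\tau + \int_0^t \nabla K(t-\tau)\ast T(\nabla u,\nabla b)(\tau)\,d\tau.
\end{equation*}
Taking the spatial $L^\infty$-norm and invoking Young's convolution inequality, I would pair $\|\nabla^2 K\|_{L^1_x}$ (Lemma \ref{tL33}) with the nonlinearity in $L^\infty_x$ for the middle term, pair $\|\nabla K\|_{L^2_x}$ with $\|T(\nabla u,\nabla b)\|_{L^2_x}$ for the last, and bound the linear piece by $\|\nabla K\|_{L^2_x}\|j_0\|_{L^2}$. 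Integrating in $t$ via Fubini, all three kernel factors have integrable time-profiles by Lemma \ref{khrdlyvhbg}, reducing the task to controlling the nonlinear terms in appropriate $L^p_t$ norms.

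The main obstacle is that the resulting estimate features $\|b\|_{L^\infty}\|\omega\|_{L^\infty}+\|u\|_{L^\infty}\|j\|_{L^\infty}$ under the time integral, involving $\|\omega\|_{L^\infty}$---the very quantity targeted by (\ref{addt311}). To close this loop, I would apply the characteristic method to the vorticity equation $\partial_t\omega+u\cdot\nabla\omega=b\cdot\nabla j$ (no dissipation on $\omega$, and $u$ is divergence-free), yielding
$$\|\omega(t)\|_{L^\infty}\leq\|\omega_0\|_{L^\infty}+\|b\|_{L^\infty_T L^\infty}\int_0^t\|\nabla j(\tau)\|_{L^\infty}\,d\tau,$$
and control $\|u\|_{L^\infty}$ in terms of $\|\omega\|_{L^\infty}$ plus lower-order norms via the 2D Biot-Savart / logarithmic Sobolev inequality. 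The term $\|T\|_{L^2}\lesssim\|\omega\|_{L^4}\|j\|_{L^4}$ is bounded by interpolation, exploiting $\omega,j\in L^\infty_T L^2$ from Lemma \ref{tL32} and $j\in L^2_T L^\infty$ from Lemma \ref{L34}. Substituting back and applying Cauchy-Schwarz with $\|j\|_{L^2_T L^\infty}\leq C(T)$ produces a closed Gr\"onwall-type inequality for $X(T):=\int_0^T\|\nabla j(\tau)\|_{L^\infty}\,d\tau$, yielding $X(T)\leq C(T)$, which is (\ref{t311}). Then (\ref{addt311}) follows immediately from the characteristic bound for $\omega$.

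For (\ref{adddt311}), I would apply $\nabla$ to the Duhamel formula for $b$ (using $-u\cdot\nabla b+b\cdot\nabla u=\nabla\cdot(b\otimes u-u\otimes b)$ from the proof of Lemma \ref{L33}), giving
$$\nabla b(t)=\nabla K(t)\ast b_0+\int_0^t\nabla^2 K(t-\tau)\ast\bigl[b\otimes u-u\otimes b\bigr](\tau)\,d\tau.$$
Taking $L^\infty$, I estimate $\|\nabla K(t)\ast b_0\|_{L^\infty}\leq\|\widehat{\nabla b_0}\|_{L^1}\lesssim\|b_0\|_{H^s}$ uniformly in $t$ (finite because $s>2$, via Cauchy-Schwarz in frequency), use $\|b\otimes u-u\otimes b\|_{L^\infty}\leq 2\|b\|_{L^\infty}\|u\|_{L^\infty}\leq C(T)$ (by Lemma \ref{L33} and (\ref{addt311})), and invoke $\|\nabla^2 K\|_{L^1_T L^1_x}\leq C(T)$ from Lemma \ref{khrdlyvhbg}, to conclude $\|\nabla b\|_{L^\infty_T L^\infty}\leq C(T)$, which implies (\ref{adddt311}) a fortiori. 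Thus the principal difficulty concentrates entirely in the Gr\"onwall closure of the first step.
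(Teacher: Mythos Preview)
Your approach to (\ref{t311}) and (\ref{addt311}) is essentially the paper's: the same Duhamel representation for $\nabla j$, the same Young-inequality pairings ($\|\nabla^{2}K\|_{L^{1}}$ against $\|b\omega\|_{L^\infty}+\|uj\|_{L^\infty}$, $\|\nabla K\|_{L^{2}}$ against $\|T(\nabla u,\nabla b)\|_{L^{2}}$), the same $L^{\infty}$ maximum-principle bound for $\omega$ from the vorticity equation, and the same Gr\"onwall closure in the variable $\int_{0}^{t}\|\omega\|_{L^{\infty}}$. The paper handles $\|u\|_{L^{\infty}}$ not via a logarithmic inequality but via the interpolation $\|u\|_{L^{\infty}}\leq C\|u\|_{L^{2}}^{1/2}\|\omega\|_{L^{\infty}}^{1/2}$, which keeps the Gr\"onwall linear; your Biot--Savart route works too but is slightly less clean.

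The one genuine departure is (\ref{adddt311}). The paper dispatches it in a single line via the Gagliardo--Nirenberg interpolation
\[
\|\nabla b\|_{L^{\infty}}\leq C\|\nabla b\|_{L^{2}}^{1/2}\|\nabla j\|_{L^{\infty}}^{1/2},
\]
combined with (\ref{t303}) and (\ref{t311}). Your parallel Duhamel argument for $\nabla b$ is correct and in fact yields the stronger conclusion $\nabla b\in L^{\infty}_{T}L^{\infty}$ (note that your use of $\|u\|_{L^{\infty}_{T}L^{\infty}}\leq C(T)$ is justified once (\ref{addt311}) is in hand, via $\|u\|_{L^{\infty}}\leq C(\|u\|_{L^{2}}+\|\omega\|_{L^{p}})$ for $p>2$). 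The paper's interpolation is more economical; your route buys a sharper bound at the cost of repeating the kernel machinery.
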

\begin{proof}[{Proof of  Lemma \ref{L35}}]
Coming back to (\ref{jequ}), we get
\begin{eqnarray} j(t)=K(t)\ast j_{0}+\int_{0}^{t}{K(t-\tau)\ast [\nabla\cdot(b\otimes \omega)-\nabla\cdot
(u\otimes j)+T(\nabla u, \nabla b)](\tau)\,d\tau}.\nonumber\end{eqnarray}
Applying the gradient $\nabla$ to the both sides of the above equality, taking $L^{\infty}$-norm in terms of space variable and using the Young inequality, it follows that
\begin{align}
\|\nabla j(t)\|_{L^{\infty}}&\leq \|\nabla K(t)\ast j_{0}\|_{L^{\infty}}+\int_{0}^{t}{\|\nabla^{2} K(t-\tau)\ast [(b\otimes \omega)-
(u\otimes j)](\tau)\|_{L^{\infty}}\,d\tau}\nonumber\\&\quad+
\int_{0}^{t}{\|\nabla K(t-\tau)\ast T(\nabla u, \nabla b)(\tau)\|_{L^{\infty}}\,d\tau}
\nonumber\\&\leq C\|\nabla K(t)\|_{L^{2}}\| j_{0}\|_{L^{2}}+C\int_{0}^{t}{\| \nabla K(t-\tau)\|_{L^{2}} \|T(\nabla u, \nabla b)(\tau)\|_{L^{2}}\,d\tau}
\nonumber\\&\quad+C\int_{0}^{t}{\|\nabla^{2} K(t-\tau)\|_{L^{1}}(\|(b\otimes \omega)(\tau)\|_{L^{\infty}}+\|
(u\otimes j)(\tau)\|_{L^{\infty}})\,d\tau}.\nonumber
\end{align}
Now we obtain by taking $L^{1}$-norm in terms of time variable and appealing to the convolution Young inequality
\begin{align}\label{t312} \|\nabla j(t)\|_{L_{t}^{1}L^{\infty}} \leq&C\|\nabla K(t)\|_{L_{t}^{1}L^{2}}\| j_{0}\|_{L^{2}}+C\|\nabla K(t)\|_{L_{t}^{1}L^{2}}\|T(\nabla u, \nabla b)\|_{L_{t}^{1}L^{2}}\nonumber\\ &+C\|\nabla^{2} K(t)\|_{L_{t}^{1}L^{1}}(\|b\otimes \omega\|_{L_{t}^{1}L^{\infty}}+\|u\otimes j\|_{L_{t}^{1}L^{\infty}})\nonumber\\ \leq&
C\|\nabla K(t)\|_{L_{t}^{1}L^{2}}\| j_{0}\|_{L^{2}}+C\|\nabla K(t)\|_{L_{t}^{1}L^{2}}\|\omega\|_{L_{t}^{2}L^{4}}\|j\|_{L_{t}^{2}L^{4}}
\nonumber\\ &+C\|\nabla^{2} K(t)\|_{L_{t}^{1}L^{1}}(\|b\omega\|_{L_{t}^{1}L^{\infty}}
+\|u\|_{L_{t}^{2}L^{\infty}}\|j\|_{L_{t}^{2}L^{\infty}})
\nonumber\\ \leq&
C\|\nabla K(t)\|_{L_{t}^{1}L^{2}}\| j_{0}\|_{L^{2}}\nonumber\\ &+C\|\nabla K(t)\|_{L_{t}^{1}L^{2}}\|\omega\|_{L_{t}^{\infty}L^{2}}^{\frac{1}{2}}
\|\omega\|_{L_{t}^{1}L^{\infty}}^{\frac{1}{2}}
\|j\|_{L_{t}^{\infty}L^{2}}^{\frac{1}{2}}
 \|j\|_{L_{t}^{1}L^{\infty}}^{\frac{1}{2}}
\nonumber\\ &+C\|\nabla^{2} K(t)\|_{L_{t}^{1}L^{1}}(\|b\|_{L_{t}^{\infty}L^{\infty}}
\|\omega\|_{L_{t}^{1}L^{\infty}}+
\|u\|_{L_{t}^{\infty}L^{2}}^{\frac{1}{2}}
\|\omega\|_{L_{t}^{1}L^{\infty}}^{\frac{1}{2}}\| j\|_{L_{t}^{2}L^{\infty}})
\nonumber\\ =&
H_{1}(t)+H_{2}(t)\|\omega\|_{L_{t}^{1}L^{\infty}}^{\frac{1}{2}}
+H_{3}(t)\|\omega\|_{L_{t}^{1}L^{\infty}},
\end{align}
where $H_{l}(t)$ ($l=1,2,3$) are given by
\begin{align}
H_{1}(t)=C\|\nabla K(t)\|_{L_{t}^{1}L^{2}}\| j_{0}\|_{L^{2}};\qquad H_{3}(t)= C\|b\|_{L_{t}^{\infty}L^{\infty}}\|\nabla^{2} K(t)\|_{L_{t}^{1}L^{1}};\nonumber\end{align}
\begin{align}
H_{2}(t)= C(\|\nabla K(t)\|_{L_{t}^{1}L^{2}}\|\omega\|_{L_{t}^{\infty}L^{2}}^{\frac{1}{2}}
\|j\|_{L_{t}^{\infty}L^{2}}^{\frac{1}{2}}
\|j\|_{L_{t}^{1}L^{\infty}}^{\frac{1}{2}}+
\|\nabla^{2} K(t)\|_{L_{t}^{1}L^{1}}
\|u\|_{L_{t}^{\infty}L^{2}}^{\frac{1}{2}}\| j\|_{L_{t}^{2}L^{\infty}}).\nonumber
\end{align}
Thanks to (\ref{tok208}), (\ref{t303}), (\ref{t304}), (\ref{t305}) and (\ref{t306}), it is easy to show that $H_{l}(t)$ ($l=1,2,3$) are non-decreasing functions satisfying
$$H_{l}(t)\leq C(T,u_{0},b_{0}),\quad \forall\,t\in[0,\,T].$$
Consequently, it thus follows from (\ref{t312}) that
\begin{align}\label{t313}
 \int_{0}^{t}\|\nabla j(\tau)\|_{L^{\infty}}\,d\tau  &\leq
H_{1}(t)+\frac{1}{4}H_{2}(t)+H_{2}(t)\|\omega\|_{L_{t}^{1}L^{\infty}}
+H_{3}(t)\|\omega\|_{L_{t}^{1}L^{\infty}}\nonumber\\&=
H_{1}(t)+\frac{1}{4}H_{2}(t)+(H_{2}(t)+H_{3}(t))\int_{0}^{t}\| \omega(\tau)\|_{L^{\infty}}\,d\tau
\nonumber\\&\leq \zeta_{1}(T)
+\zeta_{2}(T)\int_{0}^{t}\| \omega(\tau)\|_{L^{\infty}}\,d\tau,
\end{align}
where
$$\zeta_{1}(T)=H_{1}(T)+\frac{1}{4}H_{2}(T)<\infty,\qquad \zeta_{2}(T)=H_{2}(T)+H_{3}(T)<\infty.$$
 Multiplying the vorticity $\omega$ equation of
(\ref{VMHD}) by $|\omega|^{p-2}\omega$ and integrating over $\mathbb{R}^{2}$
with respect to variable $x$, it holds
\begin{align}
\frac{1}{p}\frac{d}{dt}\|\omega(t)\|_{L^{p}}^{p}=&\int_{\mathbb{R}^{2}} (b\cdot\nabla j) \omega|\omega|^{p-2}\,dx\nonumber\\ \leq&
\|b\|_{L^{\infty}}\|\nabla j\|_{L^{p}} \|\omega\|_{L^{p}}^{p-1}.\nonumber
\end{align}
We thus have
$$\frac{d}{dt}\|\omega(t)\|_{L^{p}}\leq \|b\|_{L^{\infty}}\|\nabla j\|_{L^{p}}.$$
Integrating in time, the outcome is
$$\|\omega(t)\|_{L^{p}}\leq \|\omega(0)\|_{L^{p}}+\int_{0}^{t}\|b(\tau)\|_{L^{\infty}}
\|\nabla j(\tau)\|_{L^{p}}\,d\tau.$$
Letting $p\rightarrow\infty$, it has
\begin{eqnarray} \|\omega(t)\|_{L^{\infty}}\leq \|\omega(0)\|_{L^{\infty}}+\int_{0}^{t}\|b(\tau)\|_{L^{\infty}}
\|\nabla j(\tau)\|_{L^{\infty}}\,d\tau.\nonumber
\end{eqnarray}
By (\ref{t304}), we conclude
\begin{eqnarray}\label{t314}\|\omega(t)\|_{L^{\infty}}\leq \|\omega(0)\|_{L^{\infty}}+C(T)\int_{0}^{t}
\|\nabla j(\tau)\|_{L^{\infty}}\,d\tau.
\end{eqnarray}
Now letting
$$G(t):=\zeta_{1}(T)
+\zeta_{2}(T)\int_{0}^{t}\| \omega(\tau)\|_{L^{\infty}}\,d\tau,$$
it follows from (\ref{t313}) that
$$\int_{0}^{t}\|\nabla j(\tau)\|_{L^{\infty}}\,d\tau\leq G(t).$$
Thanks to (\ref{t314}), we easily get
\begin{align}
\frac{d}{dt}G(t)&=
\zeta_{2}(T)\| \omega(t)\|_{L^{\infty}}\nonumber\\&\leq
\zeta_{2}(T)\big(\|\omega(0)\|_{L^{\infty}}+C(T)\int_{0}^{t}
\|\nabla j(\tau)\|_{L^{\infty}}\,d\tau\big)\nonumber\\&\leq
\zeta_{2}(T)\big(\|\omega(0)\|_{L^{\infty}}+C(T)G(t)\big).
\end{align}
The following key estimate is an easy consequence of the Gronwall lemma
$$G(t)\leq C(T,u_{0},b_{0}),$$
which further implies that
$$\int_{0}^{t}\|\nabla j(\tau)\|_{L^{\infty}}\,d\tau\leq C(T,u_{0},b_{0}).$$
From the above estimate combined with (\ref{t314}), we get
$$\|\omega(t)\|_{L^{\infty}}\leq C(T,u_{0},b_{0}).$$
The following interpolation inequality
$$\|\nabla b\|_{L^{\infty}}\leq C\|\nabla b\|_{L^{2}}^{\frac{1}{2}}\|\nabla j\|_{L^{\infty}}^{\frac{1}{2}},$$
as well as (\ref{t303}) and (\ref{t311}) yields that
$$\int_{0}^{t}{
\|\nabla b(\tau)\|_{L^{\infty}}^{2} \,d\tau}\leq C(T,u_{0},b_{0}).$$
This achieves the proof of Lemma \ref{L35}.
\end{proof}

\vskip .1in
\textbf{The global $H^{s}$ estimate}
To show the global bound for $(u,\,b)$ in $H^{s}$ with $s>2$, we apply $\Upsilon^{s}$ with $\Upsilon:=(I-\Delta)^{\frac{1}{2}}$ to the equations $u$ and $b$, and take the $L^2$ inner product of the resulting equations with $(\Upsilon^{s}u,\,\Upsilon^{s}b)$ to obtain the energy inequality
\begin{align}
& \frac{1}{2}\frac{d}{dt}(\|u(t)\|_{H^{s}}^{2}+\|b(t)\|_{H^{s}}^{2})
+\|\mathcal{L}^{\frac{1}{2}}b\|_{H^{s}}^{2} \nonumber\\
&=  -\int_{\mathbb{R}^{2}}{[\Upsilon^{s}, u\cdot\nabla]u\cdot
\Upsilon^{s}u\,dx}+\int_{\mathbb{R}^{2}}{[\Upsilon^{s},
b\cdot\nabla]b\cdot
\Upsilon^{s}u\,dx}\nonumber\\
& \quad- \int_{\mathbb{R}^{2}}{[\Upsilon^{s}, u\cdot\nabla]b\cdot
\Upsilon^{s}b\,dx}+\int_{\mathbb{R}^{2}}{[\Upsilon^{s},
b\cdot\nabla]u\cdot
\Upsilon^{s}b\,dx}\nonumber\\
&=  J_{1}+J_{2}+J_{3}+J_{4},\nonumber
\end{align}
where $[a,\,b]$ is the standard commutator notation, namely $[a,\,b]=ab-ba$.
By means of the following Kato-Ponce inequality (see \cite{kaPonce})
$$\|[\Upsilon^{s}, f]g\|_{L^{p}}\leq C(\|\nabla f\|_{L^{\infty}}\|\Upsilon^{s-1}g\|_{L^{p}}+\|g\|_{L^{\infty}}\|\Upsilon^{s}f\|_{L^{p}}),\quad 1<p<\infty,$$
one can deduce that
$$J_{1}\leq C\|\nabla u\|_{L^{\infty}}\|u\|_{H^{s}}^{2},\qquad J_{2}\leq C\|\nabla b\|_{L^{\infty}}(\|u\|_{H^{s}}^{2}+\|b\|_{H^{s}}^{2}),$$
$$J_{3}+J_{4}\leq C(\|\nabla u\|_{L^{\infty}}+\|\nabla b\|_{L^{\infty}})(\|u\|_{H^{s}}^{2}+\|b\|_{H^{s}}^{2}).$$
Consequently, it implies that
\begin{eqnarray}
 \frac{d}{dt}(\|u(t)\|_{H^{s}}^{2}+\|b(t)\|_{H^{s}}^{2})
+\|\mathcal{L}^{\frac{1}{2}}b\|_{H^{s}}^{2}
\leq C(\| \nabla u\|_{L^{\infty}}+\|\nabla b\|_{L^{\infty}})(\|{u}\|_{H^{s}}^{2}+\|b\|_{H^{s}}^{2}).\nonumber
\end{eqnarray}
To bound the term $\| \nabla u\|_{L^{\infty}}$ with $\|\omega\|_{L^{\infty}}$, we need the following Sobolev extrapolation inequality with logarithmic correction (see e.g., \cite{Brezis1})
$$\|\nabla u\|_{L^{\infty}(\mathbb{R}^{2})} \leq C\Big(1+\|u\|_{L^{2}(\mathbb{R}^{2})}+
\|\omega\|_{L^{\infty}(\mathbb{R}^{2})} \ln(e+\|
u\|_{H^{s}(\mathbb{R}^{2})})\Big),\quad s>2.$$
Consequently, it enables us to get
\begin{align}
& \frac{d}{dt}(\|u(t)\|_{H^{s}}^{2}+\|b(t)\|_{H^{s}}^{2})
+\|\mathcal{L}^{\frac{1}{2}}b\|_{H^{s}}^{2} \nonumber\\
&\leq  C(1+\|\omega\|_{L^{\infty}}+\|\nabla b\|_{L^{\infty}})\ln(e+\|
u\|_{H^{s}}^{2}+\|b\|_{H^{s}}^{2})(\|{u}\|_{H^{s}}^{2}+\|b\|_{H^{s}}^{2}).
\end{align}
Applying the log-Gronwall type inequality  as well as the estimates (\ref{addt311}) and (\ref{adddt311}), we eventually obtain
$$\|u(t)\|_{H^{s}}+\|b(t)\|_{H^{s}}+\int_{0}^{t}\|\mathcal{L}^{\frac{1}{2}}b(\tau)\|_{H^{s}}^{2}\,d\tau\leq C(t),$$
which together with (\ref{adduse}) implies
$$\int_{0}^{t}\|b(\tau)\|_{H^{s+1}}^{2}\,d\tau\leq C(t).$$
This is nothing but the desired global $H^{s}$ estimates.

\vskip .2in

With the global bounds in the previous lemmas at our disposal, we are ready to
prove Theorem \ref{Th1}.
\begin{proof}[{Proof of Theorem \ref{Th1}}]
With a priori estimates achieved in the previous lemmas, it is a standard procedure to complete
the proof of Theorem \ref{Th1}.
The proof is achieved by using a standard procedure. First we seek the solution
of a regularized system. In order to do this, we recall the mollification
of $\varrho_{N}f$ given by
$$(\varrho_{N}f)(x)=N^{2}\int_{\mathbb{R}^{2}}{\eta\big(N(x-y)\big)f(y)\,dy},$$
where $0\leq\eta(|x|)\in C_{0}^{\infty}(\mathbb{R}^{2})$ satisfies $\int_{\mathbb{R}^{2}}{\eta(y)\,dy}=1$.
Now we regularize our system (\ref{GMHD}) as follows
\begin{equation}
\left\{\aligned &\partial_{t}u^{N}+\mathbb{P}\varrho_{N}((\varrho_{N}u^{N}\cdot\nabla) \varrho_{N}u^{N})=
\mathbb{P}\varrho_{N}((
\varrho_{N}b^{N}\cdot\nabla) \varrho_{N}b^{N}),\\
&\partial_{t}b^{N}+\varrho_{N}((\varrho_{N}u^{N}\cdot\nabla) \varrho_{N}b^{N})+\mathcal{J}\varrho_{N}b^{N}=\varrho_{N}((\varrho_{N}b^{N}\cdot\nabla) \varrho_{N}u^{N}),\\
&\nabla\cdot u^{N}=\nabla\cdot b^{N}=0,\\
&u^{N}(x,0)=\varrho_{N}u_{0}(x),\quad b^{N}(x,0)=\varrho_{N}b_{0}(x),\nonumber
\endaligned \right.
\end{equation}
where $\mathbb{P}$ denotes the Leray projection operator (onto divergence-free vector fields).
For any fixed $N>0$, using properties of mollifiers and following the same argument used in proving the previous lemmas, it is not difficult to establish the
global bound, for any $t\in (0, \infty)$,
$$\|u^{N}(t)\|_{H^{s}}+\|b^{N}(t)\|_{H^{s}}+\int_{0}^{t}\|\mathcal{L}^{\frac{1}{2}}b^{N}(\tau)\|_{H^{s}}^{2}\,d\tau\leq C(t)$$
$$\int_{0}^{t}\|b^{N}(\tau)\|_{H^{s+1}}^{2}\,d\tau\leq C(t).$$
Now the standard Alaoglu's theorem allows us to obtain the global
existence of the classical solution $(u,b)$ to (\ref{GMHD}). The
uniqueness can also be easily established.
This completes the proof of Theorem \ref{Th1}.
\end{proof}

\vskip .2in
\textbf{Acknowledgements.}
The author would
like to thank the anonymous referee and the corresponding editor for their insightful comments and many valuable suggestions, which greatly improved the exposition of the manuscript.
The author was supported by the Foundation of Jiangsu Normal University (No. 16XLR029), the Natural Science Foundation of Jiangsu Province (No. BK20170224), the National Natural Science Foundation of China (No. 11701232).

\vskip .4in


\begin{thebibliography}{00} \frenchspacing
\bibitem{Agelas}
L. Agelas, \emph{Global regularity for logarithmically critical 2D MHD equations with zero viscosity}, Monatsh. Math.  \textbf{181}  (2016), 245-266.

\bibitem{Brezis1}
H. Brezis, S. Wainger, \emph{A note on limiting cases of Sobolev embedding and convolution
inequalities}, Comm. Partial Differential Equations, \textbf{5} (1980), 773-789.

\bibitem{CRegmiW}
 C. Cao, D. Regmi, J. Wu, \emph{The 2D MHD equations
with horizontal dissipation and horizontal magnetic diffusion}, J.
Differential Equations \textbf{254} (2013), 2661-2681.

\bibitem{CW2011}
C. Cao, J. Wu, \emph{Global regularity for the 2D MHD equations with mixed
partial dissipation and magnetic diffusion}, Adv. Math. \textbf{226} (2011),
1803-1822.

\bibitem{CWYSiam14}
C. Cao, J. Wu, B. Yuan, \emph{The 2D incompressible magnetohydrodynamics
equations with only magnetic diffusion}, SIAM J. Math. Anal. \textbf{{46}} (2014), 588-602.

\bibitem{Davidson01}
P.A. Davidson, \emph{An Introduction to Magnetohydrodynamics}, Cambridge University Press, Cambridge, England, 2001.

\bibitem{FNZ14MM}
J. Fan, H. Malaikah, S. Monaquel, G. Nakamura, Y. Zhou, \emph{Global Cauchy problem of 2D generalized MHD equations}, Monatsh. Math. \textbf{{175}} (2014), 127-131.

\bibitem{JZ31114}
Q. Jiu, J. Zhao, \emph{A remark on global regularity of 2D generalized
magnetohydrodynamic equations}, J. Math. Anal. Appl. \textbf{{412}} (2014),
478-484.

\bibitem{JZ31115}
Q. Jiu, J. Zhao, \emph{Global regularity of 2D generalized MHD equations with magnetic diffusion}, Z. Angew. Math. Phys. \textbf{ {66} }(2015), 677-687.

\bibitem{kaPonce}
T. Kato, G. Ponce, \emph{Commutator estimates and the Euler and the Navier-Stokes equations},
Comm. Pure Appl. Math. \textbf{41} (1988), 891-907.

\bibitem{LZ}
Z. Lei, Y. Zhou, \emph{BKM's criterion and global weak solutions for
magnetohydrodynamics with zero viscosity}, Discrete Contin. Dyn.
Syst. \textbf{25} (2009), 575-583.

\bibitem{LinZhang1}
F. Lin, L. Xu, P. Zhang, \emph{Global small solutions of 2-D incompressible MHD system}, J. Differential Equations  \textbf{259}  (2015), 5440-5485.

\bibitem{PF}
E. Priest, T. Forbes, \emph{Magnetic reconnection, MHD theory and
Applications}, Cambridge University Press, Cambridge, 2000.


\bibitem{Renwxz}
X. Ren, J. Wu, Z. Xiang, Z. Zhang, \emph{Global existence and decay of smooth solution for the 2D MHD equations without magnetic diffusion}, J. Funct. Anal. \textbf{267} (2014), 503-541.

\bibitem{ST}
M. Sermange, R. Temam, \emph{Some mathematical questions related to the
MHD equations}, Comm. Pure Appl. Math. \textbf{36} (1983), 635-664.

\bibitem{TYZ}
C. V. Tran, X. Yu, Z. Zhai, \emph{Note on solution regularity of the
generalized magnetohydrodynamic equations with partial dissipation},
Nonlinear Anal. \textbf{85} (2013), 43-51.

\bibitem{TYZ113}
C. V. Tran, X. Yu, Z. Zhai, \emph{On global regularity of 2D generalized
magnetodydrodynamics equations}, J. Differential. Equations \textbf{{254}}
(2013), 4194-4216.

\bibitem{Wu2003}
J. Wu, \emph{The generalized MHD equations}, J. Differential. Equations, \textbf{195} (2003), 284-312.

\bibitem{Wu2011}
J. Wu, \emph{Global regularity for a class of generalized
magnetohydrodynamic equations}, J. Math. Fluid Mech, \textbf{13} (2011), 295-305.

\bibitem{XuZhang}
L. Xu, P. Zhang, \emph{Global small solutions to three-dimensional incompressible MHD system}, SIAM J. Math Anal. \textbf{47} (2015), 26-65.


\bibitem{Y3efg5}
K. Yamazaki, \emph{On the global regularity of two-dimensional generalized
magnetohydrodynamics system}, J. Math. Anal. Appl. \textbf{{416}} (2014), 99-111.

\bibitem{YX2014NA}
Z. Ye, X. Xu, \emph{Global regularity of the two-dimensional
incompressible generalized magnetohydrodynamics system}, Nonlinear
Anal. \textbf{{100}} (2014), 86-96.

\bibitem{YB2014JMAA}
B. Yuan, L. Bai, \emph{Remarks on global regularity of 2D generalized MHD
equations}, J. Math. Anal. Appl. \textbf{{413}} (2014), 633-640.

\bibitem{YZhao16}
B. Yuan, J. Zhao,
\emph{Global regularity of 2D almost resistive MHD Equations},Nonlinear Anal. Real World Appl. \textbf{41} (2018), 53--65.

\bibitem{Zhangt}
T. Zhang, \emph{An elementary proof of the global existence and uniqueness theorem to 2D incompressible non-resistive MHD system}, arXiv:1404.5681v1 [math.AP].
\end{thebibliography}
\end{document}